\numberwithin{equation}{section}
\theoremstyle{plain}
\newtheorem{theorem}{Theorem}[section]
\newtheorem{lemma}[theorem]{Lemma}
\newtheorem{proposition}[theorem]{Proposition}
\theoremstyle{definition}
\newtheorem{definition}[theorem]{Definition}
\newtheorem{assumption}[theorem]{Assumption}
\theoremstyle{remark}
\newtheorem{remark}[theorem]{Remark}
\renewcommand{\P}{\mathbb P}
\newcommand{\E}{\mathbb E}
\newcommand{\R}{\mathbb R}
\newcommand{\Z}{\mathbb Z}
\newcommand{\N}{\mathbb N}
\newcommand{\si}[1]{{\mathchoice{}{}{\scriptscriptstyle}{}#1}}
\newcommand{\sN}{{\mathchoice{}{}{\scriptscriptstyle}{}N}}
\newcommand{\lr}[4]{#3\xleftrightarrow[#1]{#2} #4}
\newcommand{\nlr}[4]{#3\centernot{\xleftrightarrow[#1]{#2}} #4}
\DeclareMathAlphabet{\pazocal}{OMS}{zplm}{m}{n}
\title{Sharp phase transition for Gaussian percolation in all dimensions}
\author{Franco Severo\footnotemark[1]\footnote{Universit\'e de Gen\`eve, franco.severo@unige.ch}}
\date{}
\begin{document}
\thispagestyle{empty}
\maketitle

\begin{abstract}
We consider the level-sets of continuous Gaussian fields on $\R^d$ above a certain level $-\ell\in \R$, which defines a percolation model as $\ell$ varies. We assume that the covariance kernel satisfies certain regularity, symmetry and positivity  conditions as well as a polynomial decay with exponent greater than $d$ (in particular, this includes the Bargmann--Fock field). Under these assumptions, we prove that the model undergoes a sharp phase transition around its critical point $\ell_c$. More precisely, we show that connection probabilities decay exponentially for $\ell<\ell_c$ and percolation occurs in sufficiently thick 2D slabs for $\ell>\ell_c$.
This extends results recently obtained in dimension $d=2$ to arbitrary dimensions through completely different techniques.
The result follows from a global comparison with a truncated (i.e.~with finite range of dependence) and discretized (i.e.~defined on the lattice $\varepsilon\Z^d$) version of the model, which may be of independent interest. The proof of this comparison relies on an interpolation scheme that integrates out the long-range and infinitesimal correlations of the model while compensating them with a slight change in the parameter $\ell$.
\end{abstract}

\section{Introduction}\label{sec:intro}

Phase transition phenomena for level-sets of random fields on $\R^d$ were first studied in the 80's by Molchanov and Stepanov \cite{MoSt83a, MoSt83b, MoSt86c} and has been the object of intense research in the last decade -- see e.g.~\cite{BG17, RVb, BM18, MV, MRV20}. One of the major interests in this area of research lies on its strong links with the geometry of nodal sets of random polynomials and random spherical harmonics \cite{Sa1, An16, NaSo09, CaSa19, SaWi19}. It is believed that, under suitable assumptions on a Gaussian field $f$ on $\R^d$, the behavior of its level-sets should be very similar to that of Bernoulli percolation on $\Z^d$, even at criticality \cite{BogSch02,BogSch07}. In particular, one expects to observe a \emph{sharp phase transition} around the corresponding critical level $\ell_c$. For Bernoulli percolation, this corresponds to the exponential decay of cluster size distribution in the subcritical phase -- 
proved independently by Menshikov \cite{Men86} and Aizenman and Barsky \cite{AizBar87} -- and the existence of an infinite cluster on sufficiently thick 2D slabs in the supercritical phase -- 
proved by Grimmett and Marstrand \cite{GriMar90}. 
In the present paper, we prove that the corresponding results hold for the level-sets of continuous Gaussian fields whose covariance kernel satisfies certain regularity, symmetry and positivity conditions as well as a polynomial decay with exponent greater than $d$. 
As an important example, our assumptions are satisfied by the Bargmann--Fock field, for which these results were known only in the planar case $d=2$ \cite{RVb}. 

Our approach is based on an interpolation scheme aimed at integrating out the long-range and infinitesimal correlations of the model, thus comparing it with a truncated and discretized counterpart. This strategy is inspired by a recent proof \cite{DGRS19} of sharpness for the level-sets of the (discrete) Gaussian free field (GFF) on $\Z^d$, $d\geq 3$. The proof in \cite{DGRS19} relies extensively on a decomposition of the GFF as a sum of independent finite-range fields, which is used to perform certain ``local surgeries'' with a low cost. Continuous Gaussian fields on $\R^d$ on the other hand tend to be much more rigid objects and, in particular, such a decomposition is not available in general. The Bargmann--Fock field for instance is analytic almost surely, hence it is determined by its restriction to any open set, which in turn implies that one cannot decompose the field as a sum independent finite-range fields. This rigidity makes the implementation of finite-energy arguments much more challenging in the continuum. In order to overcome this difficulty, we make use of a shift-argument based on the Cameron-Martin formula. We hope that similar shift-arguments can be used as a general way to bypass the lack of finite-energy property in the study of Gaussian percolation.

Beyond the contrast between discrete and continuum setup, we would like to stress that the fields considered in the present paper have faster decay of correlations than the GFF and belong to a different universality class (namely, that of Bernoulli percolation). Another important difference with \cite{DGRS19} is that we do not prove the most classical notion of ``supercritical sharpness'', i.e.~that local uniqueness events happen with high probability in the supercritical phase. Nonetheless, we strongly believe that our theorem is the first step to proving this result for the Gaussian fields considered here. We would also like to highlight that unlike \cite{DGRS19}, our interpolation scheme leads to a global comparison result that is valid throughout the parameter space and does not require the assumption (by contradiction) that sharpness does not hold. We believe that this global comparison result may be of independent interest.

In recent years, a great progress has been made in the study of Gaussian percolation on the plane (i.e.~for $d=2$), for which sharpness of phase transition has been established in a series of works with progressively milder assumptions, see e.g.~\cite{RVb,MV,Riv19,MRV20}. This is mainly due to a special \emph{duality} property that is only available in the planar setting. In particular, one has $\ell_c=0$ on $\R^2$ in great generality \cite{MRV20}, which can be seen as the analogue of Kesten's celebrated result \cite{Kes80} establishing that $p_c=1/2$ for Bernoulli percolation on $\Z^2$. Another tool that makes the study of planar percolation models simpler is the Russo--Seymour--Welsh theory of crossing probabilities. In larger dimensions though, these techniques break down and the study of Gaussian percolation remains rather limited. Subcritical sharpness in dimensions $d\geq3$ has been proved only for fields with finite-range correlations in a very recent work \cite{DM21}, and it appears that no sort of supercritical sharpness result has been obtained in the literature. We therefore hope that the present work will contribute to a better understanding of the off-critical phases of Gaussian percolation, especially in dimensions $d\geq3$.

\subsection{Gaussian percolation and sharp phase transition}

Let $f$ be a stationary, centered, ergodic, continuous Gaussian field on $\R^d$. Such a field is characterized by its covariance kernel
\begin{equation}
	\kappa(x):=\E[f(0)f(x)],~~~ x\in\R^d.
\end{equation}
We are interested in the connectivity properties of the (upper-)excursion set
\begin{equation}
	\pazocal{E}(\ell):=\{x\in\R^d:~ f(x)\geq -\ell\}
\end{equation}
as the parameter $\ell\in \R$ varies. We say that $\pazocal{E}(\ell)$ \emph{percolates} if it contains an unbounded connected component. The percolation \emph{critical level} is then defined as
\begin{equation}\label{eq:l_c_def}
	\ell_c:=\sup\Big\{\ell\in \R:~ \P[\pazocal{E}(\ell) \text{ percolates}]=0\Big\}.
\end{equation}
We are going to make the following assumptions on the covariance kernel $\kappa$.

\begin{assumption}\label{assump:q}
	Suppose that there exists an $L^2$ function $q:\R^d\to\R$ such that 
	\begin{enumerate}[(i)]
		\item $\kappa=q\star q$, where $\star$ denotes the convolution on $\R^d$,
		\item there exist $\beta>d/2$ and $C\in(0,\infty)$ such that  for every $|x|\geq1$,
		\begin{equation}\label{eq:decay_q}
			\max\{|q(x)|, |\triangledown q(x)|\}\leq C|x|^{-\beta},
		\end{equation}
		\item $q\in C^3(\R^d)$ and $\partial^\alpha q\in L^2(\R^d)$ for every multi-index $\alpha$ with $|\alpha|\leq3$,
		\item $q(x)$ is invariant under sign changes and permutation of the coordinates of $x$,
		\item $q\geq 0$.
	\end{enumerate}
\end{assumption}

We denote by $B_R:=[-R,R]^d$ the $\ell^\infty$-ball of radius $R\geq0$ centered at the origin. Given a random subset $\pazocal{L}\subseteq\R^d$ and a (deterministic) domain $D\subseteq \R^d$, we say that $\pazocal{L}$ percolates in $D$ if there is an unbounded connected component in $\pazocal{L}\cap D$. For (deterministic) subsets $A,B\subseteq\R^d$, we will denote by $\{\lr{D}{\pazocal{L}}{A}{B}\}$ the event that $\pazocal{L}\cap D$ contains a path from $A$ to $B$. We also write $\{\lr{D}{\pazocal{L}}{A}{\infty}\}$ for the event that there exists an infinite connected component in $\pazocal{L}\cap D$ intersecting $A$. As for the complementary event $\{\lr{D}{\pazocal{L}}{A}{B}\}^{\mathsf{c}}$, we simply write $\{\nlr{D}{\pazocal{L}}{A}{B}\}$.
We shall drop $D$ from the notation whenever $D=\R^d$. We are now in position to state our main result.

\begin{theorem}[Sharp phase transition]\label{thm:sharpness} If $f$ satisfies Assumption~\ref{assump:q} for some $\beta>d$, then the following holds.
	\begin{itemize}
		\item For every $\ell<\ell_c$, there exists $c=c(\ell)>0$ such that for every $R\geq2$,
		\begin{equation}\label{eq:exp_decay}
			\P[\lr{}{\pazocal{E}(\ell)}{B_1}{B_R^{\mathsf{c}}}]\leq e^{-cR},
		\end{equation}
		\item For every $\ell>\ell_c$, there exists $M=M(\ell)>0$ such that $\pazocal{E}(\ell)$ percolates in $\R^2\times[0,M]^{d-2}$. Furthermore, there exists $c=c(\ell)>0$ such that for every $R\geq1$,
		\begin{equation}\label{eq:exp_decay2}
			\P[\nlr{}{\pazocal{E}(\ell)}{B_R}{\infty}]\leq e^{-cR^{d-1}}.
		\end{equation}
	\end{itemize}
\end{theorem}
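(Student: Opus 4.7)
The plan is to reduce Theorem \ref{thm:sharpness} to the analogous sharpness statement for a \emph{truncated and discretized} version of $f$, and then to invoke already-available sharpness techniques for finite-range lattice Gaussian fields. Using the white-noise representation $f=q\star W$ afforded by Assumption \ref{assump:q}(i), I would fix a smooth cutoff $\chi_L$ equal to $1$ on $B_{L/2}$ and supported in $B_L$, and define
$$f^{L,\varepsilon}(x):=\varepsilon^{d/2}\sum_{z\in\varepsilon\Z^d}(\chi_L\, q)(x-z)\,W_z$$
for an i.i.d.\ standard Gaussian family $(W_z)_{z\in\varepsilon\Z^d}$. The field $f^{L,\varepsilon}$ is stationary under translations of $\varepsilon\Z^d$ and has finite range $2L$; its covariance $\kappa^{L,\varepsilon}$ converges to $\kappa$ uniformly and in $L^1\cap L^2$ as $L\to\infty$ and $\varepsilon\to 0$, the rate in $L$ being governed by \eqref{eq:decay_q} with $\beta>d$ and the rate in $\varepsilon$ by the regularity assumption (iii).

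The heart of the argument is the \emph{global comparison}
$$\P\bigl[\lr{}{\pazocal{E}(\ell-\delta)}{A}{B}\bigr]\;\leq\;\P\bigl[\lr{}{\pazocal{E}^{L,\varepsilon}(\ell)}{A}{B}\bigr]\;\leq\;\P\bigl[\lr{}{\pazocal{E}(\ell+\delta)}{A}{B}\bigr],$$
valid for every $\ell\in\R$, every $\delta>0$, every pair of compact $A,B\subseteq\R^d$, and all $L$ large and $\varepsilon$ small depending on $\delta$. I would prove this by interpolating $f_t:=\sqrt{1-t}\,f+\sqrt{t}\,\widetilde f^{L,\varepsilon}$ with $\widetilde f^{L,\varepsilon}$ an independent copy of $f^{L,\varepsilon}$, and simultaneously evolving the level $\ell(t)$. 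Differentiating the probability of a mollified version of the connection event along the path and applying Gaussian integration by parts (Slepian's formula) expresses $\partial_t$ as a covariance-weighted second-order functional derivative of the event; the positivity (v) ensures positive association (FKG), which, together with monotonicity of the event in $\ell$, allows one to dominate the second-order expression by a first-order one in the direction of increasing $\ell$. Choosing $\ell'(t)$ to absorb the interpolation error, the total level shift is controlled by $\|\kappa-\kappa^{L,\varepsilon}\|_{L^1}$, which is arbitrarily small when $L\to\infty$ and $\varepsilon\to 0$—this is where the hypothesis $\beta>d$ (rather than the weaker $\beta>d/2$) is essential.

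Once the comparison is in place, Theorem \ref{thm:sharpness} follows from sharpness for the comparison model. Since $\pazocal{E}^{L,\varepsilon}$ is a lattice model with finite range of dependence expressed as a measurable function of i.i.d.\ variables, subcritical exponential decay follows from the OSSS/randomised-algorithm approach, essentially as implemented in \cite{DM21} in this setting, while supercritical slab percolation together with \eqref{eq:exp_decay2} is obtained from a Grimmett--Marstrand-type renormalisation adapted to finite-range dependent lattice fields. The comparison then transfers both statements to the continuous field $f$ at the cost of an arbitrarily small shift in $\ell$: if $\ell<\ell_c$ one chooses $\delta<(\ell_c-\ell)/2$ and uses the left inequality, while for $\ell>\ell_c$ one uses the right inequality.

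The principal obstacle is the differentiation step in the comparison. The indicator of a connection event is neither smooth nor monotone in individual field values—it is only monotone in the global level $\ell$—so making $\partial_t\P$ rigorous and uniform in $(L,\varepsilon)$ requires a careful mollification of $\mathbf{1}_{\{f\geq -\ell\}}$, after which the resulting Gaussian integration-by-parts expression must be bounded and then traded for a level increment while keeping the total sprinkling budget $\int_0^1|\ell'(t)|\,dt$ under control. The regularity assumption (iii) and the positivity (v) both play a decisive role at this stage, the former to justify the second-order manipulations on $f_t$ and the latter to turn second-order bounds into monotone first-order ones via FKG.
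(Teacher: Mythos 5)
Your overall architecture coincides with the paper's: truncate and discretize the field via the white-noise representation, prove a global sprinkled comparison between $\pazocal{E}(\ell)$ and the finite-range lattice model, import sharpness for the latter from OSSS/Grimmett--Marstrand technology, and transfer it back. The reduction of Theorem~\ref{thm:sharpness} to such a comparison plus finite-range sharpness is exactly Theorems~\ref{thm:sharp_finite-range} and~\ref{thm:comparison} of the paper, and your final transfer step (choosing $\delta<(\ell_c-\ell)/2$, etc.) is the paper's proof of Theorem~\ref{thm:sharpness} verbatim in spirit.

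The genuine gap is in your proof of the comparison itself. You propose the classical Gaussian interpolation $f_t=\sqrt{1-t}\,f+\sqrt{t}\,\widetilde f^{L,\varepsilon}$ and Gaussian integration by parts, which turns $\partial_t\P[f_t\in A]$ into a sum $\sum_{x,y}(\kappa-\kappa^{L,\varepsilon})(x-y)\,\E[\partial_x\partial_y F(f_t)]$, and you then assert that ``positive association (FKG), together with monotonicity of the event in $\ell$, allows one to dominate the second-order expression by a first-order one.'' This is precisely the step that does not follow. The second functional derivative of a (mollified) connection-event indicator is a signed joint-pivotality quantity; FKG gives positive correlation of increasing events but provides no inequality of the form $\E[\partial_x\partial_y F]\leq C\,\E[\partial_z F]$ that would let you absorb the covariance error into a drift $\ell'(t)$ with total budget controlled by $\|\kappa-\kappa^{L,\varepsilon}\|_{L^1}$. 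Controlling the ratio of two-point to one-point pivotality is the notoriously hard core of any such argument, and the paper does not attempt it by differentiation at all. Instead it runs a \emph{spatial} interpolation (replacing the $2N$-model by the $N$-model box by box on a paving of $\R^d$, with the sprinkling spread out by the integrable profile $\tau$), reduces the comparison to the discrete inequality $p_n\leq q_n$ of \eqref{eq:n_n+1}, and proves it via Lemma~\ref{lem:conv_bound}: FKG-based surgeries converting coarse pivotality into closed pivotality at a smaller scale, a good event bounding the field, and a Cameron--Martin shift along a thickened path to manufacture the sprinkling-pivotality event, all at a multiplicative cost $e^{CN^{\tilde\gamma}}$ that is beaten by the local-comparison error $e^{-cN^{2\beta-d-2\eta}}$ precisely when $\beta>d$. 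A secondary issue: your comparison model carries no independent noise $T_\delta$, but the paper adds it to secure the finite-energy and sprinkling properties required to run the Grimmett--Marstrand argument for the finite-range model (Theorem~\ref{thm:sharp_finite-range}); without it, the supercritical half of your ``already-available sharpness'' input is not available off the shelf.
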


\begin{remark}
It is known that there is a non-trivial phase transition -- actually, $\ell_c\in(-\infty,0]$ -- under certain general conditions \cite{MoSt83a,MoSt83b}, which are implied by the hypothesis of Theorem~\ref{thm:sharpness}. Furthermore, it is straightforward to deduce from Theorem~\ref{thm:sharpness} that, under our assumptions, one has $\ell_c=0$ for $d=2$. As mentioned above, this fact was already known -- even under weaker conditions \cite{MRV20} -- but our result provides an alternative proof.
\end{remark}

Let us mention some examples of fields satisfying the hypothesis of Theorem~\ref{thm:sharpness}. Probably the most important example is the so called \emph{Bargmann--Fock field}, which is characterized by the covariance kernel
$$\kappa(x)=e^{-\frac12 |x|^2}.$$
Indeed, in this case $\kappa= q\star q$ with $q(x)=(\frac{2}{\pi})^{\frac{d}{4}}e^{-|x|^2}$, and $q$ clearly verifies Assumption~\ref{assump:q} for every $\beta$ (in particular $\beta>d$). For this field, sharpness of phase transition was previously known only in the case $d=2$ \cite{RVb}. 

There are also examples with slower decay of correlations. For instance, consider the \emph{rational quadratic} kernel 
$$RQ_{\beta}(x):= (1+|x|^2)^{-\frac{\beta}{2}},$$
where $\beta>d/2$. By setting $q=RQ_{\beta}$ and $\kappa=q\star q$, it is clear that Assumption~\ref{assump:q} is verified (for the same value of $\beta$). By Theorem~\ref{thm:sharpness}, sharpness of phase transition holds for the associated Gaussian whenever $\beta>d$. One can also easily check that, for $|x|\geq1$, 
\begin{equation}\label{eq:decay_kappa}
	\kappa(x)\asymp
	\begin{cases} 
		|x|^{-2\beta+d}, &  \text{if $\tfrac{d}{2}<\beta< d$},\\
		|x|^{-d}\log|x|, &  \text{if $\beta=d$},\\
		|x|^{-\beta}, &  \text{if $\beta>d$}.
	\end{cases}
\end{equation}

We now discuss our assumptions, which are very similar to those considered in \cite{MV} for $d=2$. Although our approach is completely different from that of \cite{MV}, the reasons why we consider these assumptions are essentially the same. Property (i) of Assumption~\ref{assump:q} implies that the field $f$ can be represented as the convolution of $q$ with the standard white noise $W$ on $\R^d$:
\begin{equation}
	f=q\star W.
\end{equation}
This representation allows us to easily define a finite-range version of the model by simply truncating $q$.
Properties (ii) and (iii), respectively, guarantee that $f$ is \emph{locally} well approximated by a truncated and discretized version of $f$. Furthermore, the greater $\beta$ is, the better such a local approximation becomes, see Proposition~\ref{prop:local_comp}. 
Property (iv) implies that $f$ has $\Z^d$-symmetries, which allows us to directly import classical results and techniques from discrete percolation theory. 
Finally, property (v) implies that $f$ is an increasing function of $W$, thus allowing us to make use of the FKG inequality.

\subsection{Global comparison} Our strategy to prove Theorem~\ref{thm:sharpness} will consist in comparing $\pazocal{E}(\ell)$ with a sequence of models for which sharpness is known to be true. For that purpose, we will construct a truncated, discretized and noised version of $\pazocal{E}(\ell)$. As mentioned above, this approach is inspired by \cite{DGRS19}.

Let $\chi:\R^d\to [0,1]$ be a smooth and isotropic function satisfying
\begin{align*}
	\chi(x)= \begin{cases}
					1, &\text{if } |x|\leq 1/4 \\
					0, &\text{if } |x|\geq 1/2.		
				\end{cases}
\end{align*}
For $N\geq 1$, let $\chi_{\sN}(x)=\chi(x/N)$ and consider the truncated field
\begin{equation}
	f_{N}:=(q\chi_{\sN})\star W.
\end{equation}
Notice that $f_N$ has range of dependence $N$. 
Given $\varepsilon>0$, let $f_N^\varepsilon$ be the restriction of $f_N$ to $\varepsilon\Z^d$. One can also regard $f_N^{\varepsilon}$ as a Gaussian field on $\R^d$ by setting 
\begin{equation}
	f_N^\varepsilon(y):=f_N(x) ~~~\forall\, y\in x+[-\tfrac{\varepsilon}{2},\tfrac{\varepsilon}{2})^d,~ x\in \varepsilon\Z^d.
\end{equation}
Given $\delta\geq0$ (in addition to the discretization parameter $\varepsilon$), let $T_\delta=(T_\delta(x))_{x\in\varepsilon\Z^d}$ be an i.i.d. family of random variables independent of $W$ with distribution given by
\begin{align} 
	\begin{split}
	&\P[T_\delta(x)=0]=1-\delta ~~~ \text{and}\\ &\P[T_\delta(x)=-\infty]=\P[T_\delta(x)=+\infty]=\delta/2.
	\end{split}
\end{align}
We naturally extend $T_\delta$ to $\R^d$ by setting $T_\delta(y):=T_\delta(x)$ for every $y\in x+[-\frac{\varepsilon}{2},\frac{\varepsilon}{2})^d$, $x\in\varepsilon\Z^d$. Finally, for every $\ell\in \R$ we consider the percolation model
\begin{equation}
	\pazocal{E}_N^{\varepsilon,\delta}(\ell):= \{y\in \R^d :~ f_N^\varepsilon(y)+T_\delta(y)\geq -\ell\}.
\end{equation}
Notice that although $\pazocal{E}_N^{\varepsilon,\delta}(\ell)$ is defined in the continuum, it can be simply seen as a discrete site percolation model on $\varepsilon\Z^d$. In words, $\pazocal{E}_N^{\varepsilon,\delta}(\ell)$ is obtained by considering the excursions of $f_N$ above $-\ell$ on $\varepsilon\Z^d$ and then independently declaring each site $x\in\varepsilon\Z^d$ to be either unchanged (with probability $1-\delta$) or re-sampled as open or closed (with probability $\delta/2$ each). Let $\ell_c(N,\varepsilon,\delta)$ be the percolation critical level corresponding to $\pazocal{E}^{\varepsilon,\delta}_N$, as defined in \eqref{eq:l_c_def}. 

For every fixed $N\geq1$ and $\varepsilon,\delta>0$, one can easily check that the (discrete) percolation model $\pazocal{E}_{N}^{\varepsilon,\delta}(\ell)$ has a bounded-range i.i.d encoding, $\Z^d$-symmetries, positive association, finite energy, and a convenient sprinkling property -- the last two being the only reason why we introduce the noise. These properties are known to imply the following analogue of Theorem~\ref{thm:sharpness}. We refer the reader to Section 6 of \cite{DGRS19} for the precise definition of these properties and the proof of Theorem~\ref{thm:sharp_finite-range} below, which is based on adapting the techniques developed in \cite{DumRaoTas17b} and \cite{GriMar90}.

\begin{theorem}\label{thm:sharp_finite-range} Suppose that Assumption~\ref{assump:q} holds (actually, conditions (ii) and (iii) are not necessary). 
Then for every $N\geq1$ and $\varepsilon,\delta>0$, the following holds.
	\begin{itemize}
		\item For every $\ell<\ell_c(N,\varepsilon,\delta)$, there exists $c=c(N,\varepsilon,\delta,\ell)>0$ such that for every $R\geq2$,
		\begin{equation}\label{eq:exp_decay_truncated}
			\P[\lr{}{\pazocal{E}_N^{\varepsilon,\delta}(\ell)}{B_1}{B_R^{\mathsf{c}}}]\leq e^{-cR},
		\end{equation}
		\item For every $\ell>\ell_c(N,\varepsilon,\delta)$, there exists $M=M(N,\varepsilon,\delta,\ell)>0$ such that $\pazocal{E}_N^{\varepsilon,\delta}(\ell)$ percolates in $\R^2\times[0,M]^{d-2}$.Furthermore, there exists $c=c(N,\varepsilon,\delta,\ell)>0$ such that for every $R\geq1$,
		\begin{equation}\label{eq:exp_decay2_truncated}
			\P[\nlr{}{\pazocal{E}_N^{\varepsilon,\delta}(\ell)}{B_R}{\infty}]\leq e^{-cR^{d-1}}.
		\end{equation}
	\end{itemize}
\end{theorem}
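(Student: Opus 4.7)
The plan is to verify that the model $\pazocal{E}_N^{\varepsilon,\delta}(\ell)$ fits into the axiomatic frameworks of Duminil-Copin--Raoufi--Tassion \cite{DumRaoTas17b} for subcritical sharpness and Grimmett--Marstrand \cite{GriMar90} for supercritical sharpness, and then invoke those frameworks exactly as in Section~6 of \cite{DGRS19}.

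First I would write an explicit i.i.d.\ encoding. Decomposing white noise as $W = \sum_{z \in \Z^d} W_z$, with $W_z$ the restriction to $z + [0,1)^d$, the collection $\{(W_z, T_\delta(\varepsilon x))\}_{z,x}$ is i.i.d. Since $q\chi_{\sN}$ is supported in $B_{N/2}$, the value of $f_N(x)$ depends only on $W_z$'s with $|z - x|_\infty \leq \lceil N/2 \rceil + 1$, so each site $x \in \varepsilon\Z^d$ is a measurable function of finitely many i.i.d.\ inputs; hence the site percolation $\pazocal{E}_N^{\varepsilon,\delta}(\ell)$ has range of dependence at most $N + \varepsilon$. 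Positive association follows from property (v): $f_N^\varepsilon(\cdot)$ is nondecreasing in $W$ since $q\chi_{\sN} \geq 0$, and adding an independent $\{-\infty, 0, +\infty\}$-valued noise preserves monotonicity, so the joint FKG condition holds. The $\Z^d$-symmetries come from property (iv), and both finite energy and sprinkling are provided by $T_\delta$: conditionally on all other variables, each site is forced open (resp.\ closed) with probability at least $\delta/2$.

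With this setup, subcritical sharpness~\eqref{eq:exp_decay_truncated} follows from the OSSS-based argument of \cite{DumRaoTas17b}, whose only inputs are (a) an i.i.d.\ encoding with finite range, (b) FKG, and (c) monotonicity in $\ell$. One reveals the cluster of $B_1$ via a randomized decision tree; the OSSS inequality combined with FKG yields a differential inequality for $\theta_R(\ell) := \P[\lr{}{\pazocal{E}_N^{\varepsilon,\delta}(\ell)}{B_1}{B_R^{\mathsf{c}}}]$ producing the standard dichotomy --- either $\theta_R(\ell)$ decays exponentially in $R$ or $\theta_\infty(\ell) \geq c(\ell-\ell_c(N,\varepsilon,\delta))$ --- which identifies the critical level and gives \eqref{eq:exp_decay_truncated}. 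For the supercritical phase, slab percolation follows from the Grimmett--Marstrand renormalization, whose inputs are finite range, FKG, and a sprinkling property: using a small increment $\eta$ in $\ell$, one opens an independent fraction of previously-closed sites via the $T_\delta$-noise, allowing a supercritical seed in $\R^d$ to be pushed through a slab of appropriate thickness $M$. The stretched-exponential bound~\eqref{eq:exp_decay2_truncated} is then deduced by a Peierls-type surface argument: the event $\nlr{}{\pazocal{E}_N^{\varepsilon,\delta}(\ell)}{B_R}{\infty}$ forces a blocking surface of renormalized ``bad'' boxes of area at least $cR^{d-1}$ around $B_R$, and a union bound over such surfaces together with exponential smallness of bad boxes (obtained by applying slab percolation at some $\ell' \in (\ell_c(N,\varepsilon,\delta), \ell)$) yields the $e^{-cR^{d-1}}$ bound.

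The main obstacle is conceptual rather than computational: one must check that our i.i.d.\ encoding (unit-cube white-noise pieces together with $T_\delta$) is compatible with the OSSS decision-tree framework --- the tree reveals whole unit cubes rather than pointwise values, which affects constants but not the structure of the argument --- and that Grimmett--Marstrand's sprinkling can be implemented solely through the $T_\delta$-coordinates so as not to disturb the already-explored white-noise configuration. Both points are carried out in detail in Section~6 of \cite{DGRS19}, so no genuinely new ideas are needed here.
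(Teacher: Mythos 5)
Your proposal is correct and follows essentially the same route as the paper, which itself does not give a self-contained proof but verifies exactly the properties you list (bounded-range i.i.d.\ encoding, $\Z^d$-symmetries, positive association, finite energy, sprinkling) and then delegates to Section~6 of \cite{DGRS19}, where the adaptations of \cite{DumRaoTas17b} and \cite{GriMar90} are carried out. Your additional sketches of the OSSS dichotomy, the Grimmett--Marstrand renormalization via the $T_\delta$-sprinkling, and the Peierls-type surface argument for \eqref{eq:exp_decay2_truncated} are consistent with that reference.
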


We shall compare the probability of certain \emph{``admissible events''} for $\pazocal{E}(\ell)$ and $\pazocal{E}_N^{\varepsilon,\delta}(\ell\pm s)$, where $s>0$ is a small \emph{``sprinkling''} parameter, thus allowing to transfer Theorem~\ref{thm:sharp_finite-range} from $\pazocal{E}_{N}^{\varepsilon,\delta}(\ell)$ to $\pazocal{E}(\ell$). For this purpose, we introduce the following.

\begin{definition}[Admissible events]\label{def:admissible}
We say that an event $A$ (on the space of subsets of $\R^d$) is admissible if it has the form 
$$A=\{\lr{D}{}{S_1}{S_2^{\mathsf{c}}}\},$$ 
with either $S_1=[-r,r]^d$, $S_2=[-R,R]^d$ and $D=\R^d$ or $S_1=[-r,r]^2\times[-M,M]^{d-2}$, $S_2=[-R,R]^2\times[-M,M]^{d-2}$ and $D=\R^2\times [-M,M]^{d-2}$, where $M\geq0$ and $R\geq r\geq0$. Given a random subset $\pazocal{L}\subseteq\R^d$ and an event $A$ as above, we may write $\pazocal{L}\in A$ in place of $\{\lr{D}{\pazocal{L}}{S_1}{S_2^{\mathsf{c}}}\}$.
\end{definition}

We are now in position to state the following global comparison theorem, which is the heart of our strategy.  Theorem~\ref{thm:sharpness}, whose proof is shortly presented below, is a straightforward consequence of Theorems~\ref{thm:sharp_finite-range} and \ref{thm:comparison}. We believe that Theorem~\ref{thm:comparison} might have other applications though.

\begin{theorem}[Global comparison]\label{thm:comparison}
	Suppose that Assumption~\ref{assump:q} holds for some $\beta>d$. Then for every $\ell_0>0$ and $s>0$, there exist $N\geq1$ and $\varepsilon,\delta>0$ such that for every $\ell\in [-\ell_0,\ell_0]$ and every admissible event $A$,
	\begin{equation}\label{eq:comparison}
		\P[\pazocal{E}_N^{\varepsilon,\delta}(\ell-s)\in A]\leq \P[\pazocal{E}(\ell)\in A]\leq \P[\pazocal{E}_N^{\varepsilon,\delta}(\ell+s)\in A].
	\end{equation}
\end{theorem}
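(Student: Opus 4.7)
The plan is to combine a local comparison of $f$ with $f_N^\varepsilon$ on boxes of fixed size, a multi-scale renormalisation argument using the finite range of $f_N^\varepsilon$, and the noise $T_\delta$ as a sprinkling mechanism to handle the exceptional regions. The shift $s$ in the threshold absorbs the pointwise error of the approximation, while the positive $\delta$ absorbs the (small) probability of bad regions and provides enough extra randomness to cut or bridge paths as needed.

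The first step is to prove a local comparison---the Proposition~\ref{prop:local_comp} alluded to in the introduction: for every $\eta > 0$ one can choose $N$ large and $\varepsilon$ small enough so that, for every unit box $B$,
\[
\P\!\left[\,\sup_{x \in B}|f(x) - f_N^\varepsilon(x)| > s/4\,\right] \leq \eta.
\]
I would decompose $f - f_N^\varepsilon = (f - f_N) + (f_N - f_N^\varepsilon)$. The long-range remainder $f - f_N = (q(1-\chi_{\sN})) \star W$ is a centered Gaussian with variance $\int (q(1-\chi_{\sN}))^2 \leq C N^{d-2\beta}$, which is small for large $N$ (the condition $\beta > d/2$ is sufficient here), so its supremum over $B$ is controlled by Borell--TIS. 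The discretisation error is bounded by $\varepsilon \sup_B \|\nabla f_N\|$, and Assumption~\ref{assump:q}(iii) ensures $\nabla f_N$ is Gaussian with bounded variance, so Fernique's inequality gives exponential tails on $\sup_B\|\nabla f_N\|$.

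To globalise this to the (possibly very large) support of an admissible event $A$, I would use a multi-scale argument. Cover the support of $A$ by a grid of unit boxes and call a box \emph{bad} if the local comparison fails on it. Bad-box indicators have probability $\leq \eta$ and range of dependence $\lesssim N$ (since $f_N^\varepsilon$ is $N$-dependent), so by the Liggett--Schonmann--Stacey domination lemma they are dominated by a subcritical Bernoulli field when $\eta$ is small enough. Now fix $\delta$ suitably: for the left inequality, $\delta$ is large enough that every bad box contains a forced-closed $T_\delta = -\infty$ site, blocking any path of $\pazocal{E}_N^{\varepsilon,\delta}(\ell-s)$ through it; for the right inequality, $\delta$ provides enough forced-open $T_\delta = +\infty$ sites to bridge any path of $\pazocal{E}(\ell)$ across the bad regions into $\pazocal{E}_N^{\varepsilon,\delta}(\ell+s)$. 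On good boxes, the local bound and the shift $s$ give the pointwise inclusions
\[
\pazocal{E}_N^\varepsilon(\ell-s) \cap \mathrm{good} \;\subseteq\; \pazocal{E}(\ell) \cap \mathrm{good} \;\subseteq\; \pazocal{E}_N^\varepsilon(\ell+s) \cap \mathrm{good},
\]
from which~\eqref{eq:comparison} follows after combining with the sprinkling control over bad boxes.

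I expect the main obstacle to be the tuning of $(N, \varepsilon, \delta)$ so that the four ingredients---the smallness of $\eta$, the finite range $N$, the sprinkling density $\delta$, and the fixed shift $s$---balance out to give inequalities uniform in $R$ (the scale of the admissible event, which is hidden in the conclusion) and in $\ell \in [-\ell_0, \ell_0]$. Particularly delicate is the handling of forced-open sites in the \emph{left} inequality: $T_\delta = +\infty$ sites threaten to create spurious connections in $\pazocal{E}_N^{\varepsilon,\delta}(\ell-s)$ that are not present in $\pazocal{E}(\ell)$, so one needs a careful coupling or a connectivity argument (exploiting the subcriticality of the sparse $T_\delta = +\infty$ set at small $\delta$) ensuring that such sites cannot propagate admissible events across macroscopic distances.
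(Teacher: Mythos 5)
Your proposal diverges fundamentally from the paper's argument, and as written it has a genuine gap. The mechanism you propose for handling bad boxes --- using forced-closed $T_\delta=-\infty$ sites to \emph{block} paths in the left inequality and forced-open $T_\delta=+\infty$ sites to \emph{bridge} paths in the right inequality --- does not work for any fixed small $\delta>0$. A bad unit box contains order $\varepsilon^{-d}$ lattice sites, of which only a density-$\delta/2$ i.i.d.\ subset is forced open (resp.\ closed); for small $\delta$ this subset is deeply subcritical, hence neither contains a crossing of the box (so it cannot bridge a macroscopic path of $\pazocal{E}(\ell)$ through the bad region) nor a separating surface (so it cannot block a path of $\pazocal{E}_N^{\varepsilon,\delta}(\ell-s)$). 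Making $\delta$ large enough for these tasks would destroy the comparison on the good boxes. There is also a structural obstruction: any ``good box/bad box + LSS domination'' argument produces at best an \emph{additive} error term (the probability that the bad boxes themselves realize a crossing or a blocking surface at scale $R$), whereas \eqref{eq:comparison} is an exact inequality with no error term (cf.\ Remark~\ref{rmk:paper_vs_GFF}); since for subcritical $\ell$ the probability of the admissible event itself decays exponentially in $R$, an additive error that is merely ``small uniformly in $R$'' cannot be absorbed. You would further need to control the law of the field \emph{conditionally} on the configuration of bad boxes, which your sketch does not address.

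The paper's route is entirely different and is designed precisely to avoid these issues. It proves the dyadic comparison $\P[\pazocal{E}_{2N}^{\varepsilon_{2N},\delta_{2N}}(\ell)\in A]\leq \P[\pazocal{E}_N^{\varepsilon_N,\delta_N}(\ell+s_N)\in A]$ (Proposition~\ref{prop:comp_N_2N}) and telescopes over $N=2^iN_0$. The dyadic step is an interpolation that changes the model box by box from range $2N$ to range $N$ while sprinkling the level by an integrable function $\tau$; the failure of monotonicity at each step is confined to a \emph{coarse pivotality} event, whose probability is then compared \emph{multiplicatively} to the gain $q_n$ from the sprinkling via a local surgery (Lemma~\ref{lem:conv_bound}) combining FKG, a conditioning on the cluster of $S_1$, and crucially the Cameron--Martin theorem, which shifts the field along a path at cost $e^{CN^{\tilde\gamma}}$; this cost is beaten by the $e^{-cN^{\gamma}}$ failure probability of the local comparison since $\tilde\gamma<\gamma$. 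Your local comparison step (Borell--TIS for $f-f_N$, gradient bounds for $f_N-f_N^\varepsilon$) is essentially Proposition~\ref{prop:local_comp} and is fine, but everything after it --- the pivotality bookkeeping, the change of measure, and the balance of exponents that makes the inequality exact --- is missing.
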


\begin{remark}\label{rmk:quantitative}
	The proof of Theorem~\ref{thm:comparison} actually gives a more quantitative statement: for any $\eta\in(0,\beta-d)$, one can take $s\asymp N^{-\eta}$, $\varepsilon=N^{-\beta+\tfrac{d}{2}}$ and $\delta=e^{-N^{2\beta-d-2\eta}}$ -- see \eqref{eq:choice_parameters}. If one assumes a super-polynomial decay for $\partial^\alpha q$, $|\alpha|\leq1$, then it is straightforward to obtain a corresponding better bound for $s$ in terms of $N$. In particular, for the Bargmann--Fock field one can take $N\asymp \sqrt{\log s^{-1}}$.
\end{remark}

\begin{remark} \label{rmk:delta=0}
One can easily check that the proof of Theorem~\ref{thm:comparison} still works if one takes $\varepsilon$ and $\delta$ to be even smaller than those referred to in the previous remark. In particular, one can can take $\varepsilon=\delta=0$, i.e.~compare $\{f\geq -\ell\}$ with $\{f_N\geq -(\ell\pm s)\}$.
One can probably also adapt the proof to other reasonable notions of truncation and discretization as well, such as those considered in \cite{MV} for instance.
\end{remark}

\begin{remark}\label{rmk:paper_vs_GFF} 
	As we have already mentioned, Theorem~\ref{thm:comparison} differs fundamentally from the comparison obtained in Proposition 1.4 of \cite{DGRS19} as the former does \textit{not} require the assumption that $\ell$ lies in a certain \emph{fictitious regime}, and is therefore a non-vacuous statement. Furthermore, there is no additive error term in \eqref{eq:comparison}. 
\end{remark}

\begin{remark}\label{rmk:admissible_events}
	The class of events for which we can prove \eqref{eq:comparison} is actually larger than those introduced in Definition~\ref{def:admissible}. Roughly speaking, we only need that the sets $S_1, S_2$ and $D$ to be such that there exists a ``uniformly flat surface'' in $D$ separating $S_1$ from $S_2$ -- see Remark~\ref{rmk:beta>d}. In order to avoid technicalities, we decided to state the results only for box-crossings in the full space and in 2D slabs as those events are sufficient to deduce Theorem~\ref{thm:sharpness}.
\end{remark}

We now explain how to derive Theorem~\ref{thm:sharpness} from Theorems~\ref{thm:sharp_finite-range} and \ref{thm:comparison}. 

\begin{proof}[Proof of Theorem~\ref{thm:sharpness}]
 	Fix $\ell_0>0$ a real number such that $\ell_c\in(-\ell_0,\ell_0)$. First, notice that by applying \eqref{eq:comparison} with the admissible events $A_R=\{\lr{}{}{B_1}{B_R^{\mathsf{c}}}\}$ and letting $R\to\infty$, one readily concludes that $|\ell_c-\ell_c(N,\varepsilon,\delta)|\leq s$. 
 	
 	Let $\ell<\ell_c$ and assume without loss of generality that $\ell>-\ell_0$. Let $s=\frac{\ell_c-\ell}{3}$ and $N,\varepsilon,\delta$ given by Theorem~\ref{thm:comparison}. Since $\ell+s<\ell_c-s\leq \ell_c(N,\varepsilon,\delta)$, the desired exponential decay \eqref{eq:exp_decay} follows directly by applying \eqref{eq:comparison} for $A_R$ combined with \eqref{eq:exp_decay_truncated}.
 	
 	Let $\ell>\ell_c$ and assume without loss of generality that $\ell<\ell_0$. Let $s=\frac{\ell-\ell_c}{3}$ and $N,\varepsilon,\delta$ given by Theorem~\ref{thm:comparison}. Since $\ell-s>\ell_c+s\geq \ell_c(N,\varepsilon,\delta)$, we know by Theorem~\ref{thm:sharp_finite-range} that there exists $M>0$ such that $\pazocal{E}_N^{\varepsilon,\delta}(\ell-s)$ percolates in $\R^2\times[-M,M]^{d-2}$ and satisfies \eqref{eq:exp_decay2_truncated}. By applying \eqref{eq:comparison} with the admissible events $A_R=\big\{\lr{\R^2\times{[}-M,M{]}^{d-2}}{}{[-1,1]^2\times[-M,M]^{d-2}}{([-R,R]^2)^{\mathsf{c}}\times[-M,M]^{d-2}}\big\}$ and letting $R\to\infty$, one directly concludes that $\pazocal{E}(\ell)$ also percolates in $\R^2\times[-M,M]^{d-2}$. The bound \eqref{eq:exp_decay2} follows by applying \eqref{eq:comparison} for $A_{R,R'}=\{\lr{}{}{B_R}{B_{R'}^{\mathsf{c}}}\}$, taking $R'\to\infty$ and using \eqref{eq:exp_decay2_truncated}.
\end{proof}

\begin{remark}\label{rmk:supercritical}
Notice that Theorem~\ref{thm:comparison} implies more than Theorem~\ref{thm:sharpness}. One can deduce that the asymptotic behavior of any admissible event is the same as for the truncated field, which in turn can be proved to be similar to Bernoulli percolation. We also hope that the quantitative relation between $s$ and $N$ mentioned in Remark~\ref{rmk:quantitative} could be used to transfer near-critical results from $\pazocal{E}^{\varepsilon,\delta}_N$ to $\pazocal{E}$.
\end{remark}
\subsection{Open questions}

We now discus some questions left open by the present work. First, we highlight a limitation of the version of ``supercritical sharpness'' we prove here, i.e.~the existence of an infinite cluster on thick slabs. For application purposes, the most useful version of supercritical sharpness would be the fact that a certain ``local uniqueness'' event has probability very close to $1$ in the supercritical phase. This is usually the starting point of any renormalization argument aimed at studying the geometry of infinite cluster or the large deviation behavior of the finite clusters, see e.g.~\cite{Cer00,Bar04,MR3650417}. In Bernoulli percolation, it is not hard to obtain local uniqueness from percolation on slabs -- see e.g.~\cite{Gri99a}. For general Gaussian fields though, it is not clear how one would prove this implication. Nonetheless, inspired on ideas from \cite{benjaminitassion17}, it is proved in Section~4 of \cite{DGRS19} that for the discrete GFF local uniqueness happens at level $\ell+\delta$ with high probability whenever $\P[\lr{}{\pazocal{E}(\ell)}{B_{u(R)}}{B_R^{\mathsf{c}}}]\geq 1-o(R^{-d})$, where $u(R)=\exp[(\log R)^3]$. Since Theorem~\ref{thm:sharpness} provides an even stronger bound \eqref{eq:exp_decay2} in the whole supercritical phase, one can expect that an adaptation of the techniques from \cite{DGRS19} may lead to a proof of local uniqueness for the Gaussian fields considered here. However, the aforementioned lack of finite-range decomposition and finite-energy property for general Gaussian fields make the implementation of such an argument considerably harder.

It would be especially interesting to prove a similar result for the so called \emph{monochromatic random wave}, which is one of the most relevant examples of continuous Gaussian fields. Studying this field is a much more challenging task as it does not admit a white noise representation and its covariance kernel oscillates and has a quite slow decay. However, great progress in the case $d=2$ has been made recently \cite{MRV20}.

We also expect that sharpness of phase transition holds under Assumption~\ref{assump:q} for every $\beta>d/2$. However, the decay of connection and disconnection events are not always the same as in  \eqref{eq:exp_decay} and \eqref{eq:exp_decay2}. Indeed, we conjecture that under Assumption~\ref{assump:q} the correct decay of connection events in the subcritical regime $\ell<\ell_c$ is, in general, the following:
\begin{equation}\label{eq:decay_onearm}
	-\log\P[\lr{}{\pazocal{E}(\ell)}{B_1}{B_R^{\mathsf{c}}}]\asymp
	\begin{cases} 
		R^{2\beta-d}, &  \text{if $\tfrac{d}{2}<\beta< \tfrac{d+1}{2}$},\\
		{R}/{\log R}, &  \text{if $\beta=\tfrac{d+1}{2}$},\\
		R, &  \text{if $\beta>\tfrac{d+1}{2}$}.
	\end{cases}
\end{equation}
As for the correct decay of disconnection events in the supercritical regime $\ell>\ell_c$, we conjecture that  under Assumption~\ref{assump:q}, one should have the following:
\begin{equation}\label{eq:decay_discon}
	-\log\P[\nlr{}{\pazocal{E}(\ell)}{B_R}{\infty}]\asymp
	\begin{cases} 
		R^{2\beta-d}, &  \text{if $\tfrac{d}{2}<\beta<d-\tfrac{1}{2}$},\\
		{R^{d-1}}/{\log R}, &  \text{if $\beta=d-\tfrac{1}{2}$},\\
		R^{d-1}, &  \text{if $\beta>d-\tfrac{1}{2}$}.
	\end{cases}
\end{equation}
Let us mention that both decays \eqref{eq:decay_onearm} and \eqref{eq:decay_discon} have been proved for the discrete GFF on certain graphs \cite{PopovRath15,GRS21,DPR21,MR3417515,nitzschner2017solidification,nitzschner2018}.

We now discuss some aspects of our global comparison theorem. First, we do not know whether the comparison \eqref{eq:comp_N_2N} should still hold for arbitrary increasing event $A$ under the same hypothesis of Theorem~\ref{thm:comparison} (i.e.~$\beta>d$), thus leading to a stochastic domination -- see Remark~\ref{rmk:beta>d} for more details on this. In Remark~\ref{rmk:beta>d-1/2}, we mention a possible way to prove Theorem~\ref{thm:comparison} under the weaker assumption $\beta>d-\tfrac{1}{2}$. This is optimal because due to \eqref{eq:exp_decay2_truncated} and \eqref{eq:decay_discon}, the comparison \eqref{eq:comparison} cannot hold for $\beta\leq d-\tfrac12$ in the supercritical regime. On the subcritical regime $\ell<\ell_c$ though, we still believe that \eqref{eq:comparison} holds whenever $\tfrac{d+1}{2}<\beta\leq d-\tfrac{1}{2}$. Because of \eqref{eq:exp_decay_truncated} and \eqref{eq:decay_onearm}, the comparison \eqref{eq:comparison} cannot hold as stated in either subcritical or supercritical regimes for $\tfrac{d}{2}<\beta\leq\tfrac{d+1}{2}$. Nonetheless, one can still hope to prove sharpness of phase transition for all $\beta>\tfrac{d}{2}$ by showing a global comparison on a certain fictitious regime, as originally done in \cite{DGRS19} for the GFF. In this case though, the comparison statement obtained becomes vacuous a posteriori.

\paragraph{Notation.}
For $A,B\subseteq\R^d$, we denote $A+B:=\{a+b:~a\in A,~b\in B\}$. Given $x\in \R^d$ and $R\geq 0$, let $B_R(x):= x + B_R$, where $B_R=[-R,R]^d$. For $S\subset \R^d$, we denote  its boundary as $\partial S:=\bar{S}\setminus S^o$.  
We write $c,c',C,C'$ for generic constants in $(0,\infty)$ which may depend implicitly on the field $f$, the dimension $d$ and can also change from line to line. Their dependence on other parameters will always be explicit. Numbered constants $c_0,c_1,C_0,C_1,...$ refer to constants that are used repeatedly in the text and are numbered according to their first appearance.

\section{Local comparison and Cameron--Martin formula}

In this section we overview a few basic properties of Gaussian fields. First, we show that the local difference between a field and its truncated and discretized version is small with very high probability. Then we review the Cameron--Martin formula describing the Radon--Nikodym derivative of a Gaussian field with respect to its shift by a deterministic function, which will be instrumental when implementing a certain finite-energy argument in Section~\ref{sec:interpolation}.

By applying standard Gaussian bounds, one can prove that $f_N^\varepsilon$ approximates $f$ very well locally (i.e.~in a ball of radius $1$). This is the content of the following proposition, which is analogous to Proposition 3.11 from \cite{MV}. Although \cite{MV} is concerned with the case $d=2$ and considers a slightly different notion of discretization, their proof can be easily adapted to our framework and we thus choose to omit the proof here.
\begin{proposition}[Local comparison]\label{prop:local_comp}
	Suppose that Assumption~\ref{assump:q} holds for some $\beta>\frac{d}{2}$ (actually, conditions (iv) and (v) are not necessary). Then there exists $c>0$ such that for every $N\geq1$ and $\varepsilon>0$,
	\begin{align}
		\label{eq:local_comp_r}
		\P\big[ \sup_{y\in B_1}|f(y)-f_N(y)| \geq s \big] &\leq \exp\big(-cs^2 N^{2\beta-d}\big), &&\text{for all } s\geq N^{-\beta+\frac{d}{2}},\\
		\label{eq:local_comp_epsilon}
		\P\big[ \sup_{y\in B_1}|f_N(y)-f_N^\varepsilon(y)|\geq s \big] &\leq \exp\big(-cs^2 \varepsilon^{-2}\big), &&\text{for all } s\geq\varepsilon.
	\end{align}
\end{proposition}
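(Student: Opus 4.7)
The plan is to apply Gaussian concentration---specifically the Borell--TIS inequality---separately to the two centered Gaussian fields $g_1 := f - f_N$ and $g_2 := f_N - f_N^\varepsilon$. For each one I would (i) bound the pointwise variance $\sigma_i^2 := \sup_{y \in B_1} \mathrm{Var}(g_i(y))$, (ii) bound the expected supremum $m_i := \E[\sup_{y \in B_1} |g_i(y)|]$, and (iii) invoke
\begin{equation*}
\P\bigl[\sup_{y\in B_1}|g_i(y)| \geq m_i + t\bigr] \leq 2\, e^{-t^2/(2\sigma_i^2)}
\end{equation*}
with a threshold in $s$ chosen large enough to absorb the mean term.

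For \eqref{eq:local_comp_r}, I would write $f - f_N = (q(1-\chi_N)) \star W$, so the pointwise variance at any $y \in \R^d$ equals $\int q(u)^2 (1-\chi_N(u))^2\, du \leq \int_{|u| \geq N/4} q(u)^2\, du$. By \eqref{eq:decay_q} with $|\alpha| = 0$ this is bounded by $C N^{-2\beta + d}$ (using $\beta > d/2$ to make the tail integral converge), and applying the same estimate with $|\alpha| = 1$ shows that the pointwise variance of $\nabla(f - f_N)$ is $O(N^{-2\beta + d})$ as well. Consequently the canonical distance of $f - f_N$ on $B_1$ is Lipschitz with constant $O(N^{-\beta + d/2})$, and a Dudley-type entropy integral yields $m_1 \leq C N^{-\beta + d/2}$. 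Plugging this into Borell--TIS with $t = s - m_1 \geq s/2$---valid once $s$ exceeds a large enough constant multiple of $N^{-\beta + d/2}$, with the remaining range being trivial since the right-hand side of \eqref{eq:local_comp_r} is then bounded below by a positive constant---produces the desired bound.

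For \eqref{eq:local_comp_epsilon}, the key observation is that for $y \in x + [-\tfrac{\varepsilon}{2},\tfrac{\varepsilon}{2})^d$ with $x \in \varepsilon\Z^d$ one has $f_N^\varepsilon(y) = f_N(x)$, so the mean value theorem gives
\begin{equation*}
\sup_{y \in B_1} |f_N(y) - f_N^\varepsilon(y)| \leq \varepsilon \sqrt{d}\, \sup_{z \in B_{1+\varepsilon}} |\nabla f_N(z)|.
\end{equation*}
The centered Gaussian field $\nabla f_N = \nabla(q\chi_N) \star W$ has pointwise variance bounded by $\|\nabla(q\chi_N)\|_{L^2}^2 \leq 2\|\nabla q\|_{L^2}^2 + C N^{-2} \|q\|_{L^2}^2 \leq C$ uniformly in $N$, using condition (iii) of Assumption~\ref{assump:q}. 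An analogous computation for $\nabla^2 f_N$ (using (iii) up to $|\alpha| = 2$) shows that $\E[\sup_{B_{1+\varepsilon}} |\nabla f_N|]$ is also bounded by a constant independent of $N$ and $\varepsilon$. A final Borell--TIS application then yields $\P[\sup_{B_{1+\varepsilon}} |\nabla f_N| \geq s/(\varepsilon\sqrt{d})] \leq e^{-cs^2/\varepsilon^2}$ as soon as $s/\varepsilon$ exceeds a large enough constant, which is absorbed into the statement $s \geq \varepsilon$ by adjusting $c$.

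The main technical subtlety---and the reason Assumption~\ref{assump:q} imposes not only decay and integrability of $q$ but also of its derivatives---is that the two thresholds $N^{-\beta + d/2}$ and $\varepsilon$ are precisely the intrinsic pointwise scales of the respective difference fields, leaving no slack in the Borell--TIS step. One therefore cannot afford any logarithmic loss when bounding $m_i$, which is exactly what forces the Lipschitz control of the canonical distance and hence the use of \eqref{eq:decay_q} up to $|\alpha| = 1$ together with $\partial^\alpha q \in L^2$ up to $|\alpha| = 2$.
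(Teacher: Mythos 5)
The paper does not actually prove Proposition~\ref{prop:local_comp} (it refers to Proposition~3.11 of \cite{MV}), and your argument is exactly the standard one being invoked there: decompose into the two centered fields, bound pointwise variances via the $L^2$ tails of $q(1-\chi_N)$ and of $\nabla(q\chi_N)$ (noting that the cutoff contributes only an extra $N^{-1}q$ term), control the canonical metric through the gradient fields, and conclude by Dudley plus Borell--TIS. The variance and Lipschitz computations are right, and your observation that a Lipschitz canonical metric on a fixed $d$-dimensional box yields $\E[\sup|g_i|]\lesssim\sigma_i$ with no logarithmic loss is the correct reason the stated thresholds are attainable.

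The one imprecise step is your dismissal of the near-threshold range: for, say, $N^{-\beta+d/2}\leq s\leq C N^{-\beta+d/2}$ the right-hand side of \eqref{eq:local_comp_r} is indeed bounded below by a constant, but that constant is strictly less than $1$ for any $c>0$, while the left-hand side is a probability that you have not shown to be bounded away from $1$ uniformly in $N$. To close this you need either a uniform small-ball estimate such as $\P\big[\sup_{B_1}|f-f_N|< N^{-\beta+d/2}\big]\geq\delta_0>0$ (obtainable, e.g., by covering $B_1$ and combining pointwise bounds with the Gaussian correlation inequality, since all relevant quantities scale like $N^{-\beta+d/2}$), or simply to prove the statement for $s\geq C_0N^{-\beta+d/2}$ with a large constant $C_0$, which is all that is used in the paper (where $s\asymp N^{-\eta}$ with $\eta<\beta-d$). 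The same remark applies to the range $\varepsilon\leq s\leq C\varepsilon$ in \eqref{eq:local_comp_epsilon}. This is a matter of constants rather than a missing idea, but as written the phrase ``trivial'' is not justified.
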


\begin{remark}
	Under a stronger assumption on the decay of $\partial^\alpha q$, $|\alpha|\leq1$, the bound \eqref{eq:local_comp_r} can be be strengthened correspondingly. For the Bargmann--Fock field in particular one can prove that
	\begin{align}\label{eq:local_comp_r2}
		&\P\big[ \sup_{x\in B_1}|f(x)-f_N(x)| \geq s \big] \leq \exp\big(-cs^2 e^{cN^2}\big), &\text{\emph{for all} } s\geq e^{-\tfrac{c}{2}N^2}.
	\end{align}
\end{remark}

We now introduce the Cameron--Martin space associated to a Gaussian field $g$ on an abstract space $X$ with covariance kernel $K(x,y):=\E[g(x)g(y)]$, $x,y\in X$. First, let $G$ be the Hilbert space of centered Gaussian random variables given by the closure in $L^2$ of the linear space spanned by $g$, i.e.~the set
\begin{equation}
	\sum_{i\in \N}a_i g(y_i), ~\text{ with } y_i\in X,~a_i\in \R,~ \sum_{i,j\in\N}a_i a_j K(y_i,y_j)<\infty.
\end{equation}
Then define the (injective) linear map $P: G\to \R^X$ given by
\begin{equation}
	\xi \mapsto P(\xi)(\cdot):=\langle\xi,f(\cdot)\rangle_G=\E[\xi g(\cdot)].
\end{equation}
The function space $H:=P(G)$, equipped with the inner product
\begin{equation}
	\langle h_1,h_2\rangle_H:=\langle P^{-1}(h_1),P^{-1}(h_2)\rangle_G=\E[P^{-1}(h_1)P^{-1}(h_2)]
\end{equation}
is a Hilbert space known as the Cameron--Martin space associated to $g$. By construction, $P$ defines an isometry between $G$ and $H$. Note that for any $y\in X$, the function $K(y,\cdot)$ is in $H$ and $P^{-1}(K(y,\cdot))=g(y)$. The following is a classical result -- see e.g.~Theorems 14.1 and 3.33 of \cite{janson_1997}.

\begin{theorem}[Cameron--Martin]\label{thm:Cameron-Martin}
	Let $g$ be a centered Gaussian field and $H$ its Cameron-Martin space. Then, for every $h\in H$, the Radon--Nikodym derivative between the law of $g+h$ and the law of $g$ is given by
	\begin{equation}
		\exp\big\{P^{-1}(h) - \tfrac{1}{2}\E[P^{-1}(h)^2]\big\}.
	\end{equation}
	In particular, for every $g$-measurable event $E$,
	\begin{equation}
		\P[g+h\in E]=\E\left[\exp\big\{P^{-1}(h) - \tfrac{1}{2}\E[P^{-1}(h)^2]\big\} \,1_{g\in E}\right].
	\end{equation}
\end{theorem}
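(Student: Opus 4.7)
The plan is to reduce the identity to the classical finite-dimensional Gaussian shift formula, and then bootstrap it by means of the $L^2$-isometry $P$ together with a monotone class argument. First I would treat the case where $h$ comes from a finite linear combination $\xi=\sum_{i=1}^n a_i g(y_i) \in G$, so that $h(\cdot)=\sum_i a_i K(y_i,\cdot)$, and where the event $E$ depends only on $(g(y_1),\ldots,g(y_n))$. Assuming the covariance matrix $\Sigma_{ij}=K(y_i,y_j)$ is non-singular (otherwise one restricts to the range of $\Sigma$), a direct density computation on $\R^n$ gives
\[
\frac{d\mathcal{L}(g+h)}{d\mathcal{L}(g)}\bigl(g(y_1),\ldots,g(y_n)\bigr) = \exp\Bigl(\textstyle\sum_i a_i g(y_i) - \tfrac{1}{2}\sum_{i,j}a_i a_j K(y_i,y_j)\Bigr),
\]
and one recognises the exponent as $\xi-\tfrac12\E[\xi^2]=P^{-1}(h)-\tfrac12\E[P^{-1}(h)^2]$.

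Next I would promote this to arbitrary $h \in H$ while still restricting to cylindrical events $E$. Decompose $h=h'+h''$, where $h'$ is the $H$-orthogonal projection of $h$ onto $\mathrm{span}\{K(y_i,\cdot)\}_{i=1}^n$. The first step applies to $h'$. For the orthogonal piece $h''$, the isometry $P$ together with the reproducing identity $\langle h'',K(y_i,\cdot)\rangle_H=\E[P^{-1}(h'')g(y_i)]$ shows that $P^{-1}(h'')$ is $L^2$-orthogonal, hence independent, from $(g(y_1),\ldots,g(y_n))$. Moreover $h''(y_i)=\langle h'',K(y_i,\cdot)\rangle_H=0$, so shifting $g$ by $h''$ leaves the values $g(y_i)$ invariant. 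Combining the two shifts and using $\E[\exp(P^{-1}(h'')-\tfrac12\E[P^{-1}(h'')^2])]=1$ together with independence yields the formula for $h$ on every cylindrical event.

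Finally, to extend to arbitrary $g$-measurable events, approximate $\xi=P^{-1}(h)$ in $L^2$ by a sequence $\xi_m \in G$ of finite linear combinations, and set $h_m=P(\xi_m)$ and $D_m=\exp(\xi_m-\tfrac12\E[\xi_m^2])$. The previous two steps give $\P[g+h_m\in E]=\E[D_m\mathbf{1}_{g\in E}]$ for every cylindrical $E$. Since $\xi_m\to\xi$ in $L^2$ and the $\xi_m$ are jointly Gaussian, the family $(D_m)$ is bounded in every $L^p$ (by the explicit moment generating function of a Gaussian), hence uniformly integrable; combined with $D_m\to D:=\exp(\xi-\tfrac12\E[\xi^2])$ in probability, this gives $D_m\to D$ in $L^1$. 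On the left-hand side, $|h_m(y)-h(y)|\leq \|\xi_m-\xi\|_{L^2}\sqrt{K(y,y)}$ forces $h_m(y_i)\to h(y_i)$, so the cylindrical identity passes to the limit. A standard Dynkin/monotone-class argument then promotes the identity from cylindrical events to the full $\sigma$-algebra generated by $g$.

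The main obstacle is justifying the $L^1$-convergence $D_m\to D$ and, relatedly, verifying once and for all that $P^{-1}$ extends unambiguously to every $h \in H$ so that the exponent $\xi-\tfrac12\E[\xi^2]$ is well-defined up to $\P$-null sets. Both issues are handled by the fact that exponentials of Gaussian linear functionals have finite moments of every order and that $\xi_m$ is Cauchy in every $L^p$ as soon as it is Cauchy in $L^2$; the remainder of the argument is routine bookkeeping with the isometry $P$ and the reproducing property $P^{-1}(K(y,\cdot))=g(y)$.
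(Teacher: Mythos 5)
Your argument is correct, but note that the paper does not actually prove this statement: it is quoted as a classical fact with a pointer to Theorems 14.1 and 3.33 of \cite{janson_1997}, so there is no in-paper proof to compare against. What you have written is essentially the standard textbook derivation, and all the key points are sound: the finite-dimensional density computation with shift $\Sigma a$ giving exponent $a^{\mathsf{T}}x-\tfrac12 a^{\mathsf{T}}\Sigma a$, the orthogonal decomposition $h=h'+h''$ with $h''(y_i)=\langle h'',K(y_i,\cdot)\rangle_H=0$ and $P^{-1}(h'')$ independent of $(g(y_1),\dots,g(y_n))$, the normalization $\E[\exp(P^{-1}(h'')-\tfrac12\E[P^{-1}(h'')^2])]=1$, and the uniform integrability of the exponential densities via Gaussian moment generating functions. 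One organizational remark: your Step 2 already establishes the identity for \emph{every} $h\in H$ on cylindrical events, so the approximation $\xi_m\to\xi$ in Step 3 is redundant for its stated purpose; once the two probability measures $E\mapsto\P[g+h\in E]$ and $E\mapsto\E[D\,1_{g\in E}]$ (both of total mass $1$) agree on the $\pi$-system of cylinder sets, Dynkin's lemma alone extends the equality to the full $\sigma$-algebra generated by $g$. The $L^2$-approximation machinery is only needed if one insists on constructing $P^{-1}(h)$ as a limit of finite linear combinations, which is already built into the definition of $G$ as an $L^2$-closure.
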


\section{Interpolation scheme}\label{sec:interpolation}

In this section we prove Theorem~\ref{thm:comparison}. Fix an arbitrary $\eta\in(0,\beta-d)$ and let $\gamma:=2\beta-d-2\eta>d$. For $N\geq1$, we set
\begin{equation}\label{eq:choice_parameters}
	s_{\sN}:=N^{-\eta},~ \varepsilon_{\sN}:=N^{-\beta+\tfrac{d}{2}} \text{ and } \delta_{\sN}:=e^{-N^\gamma}.
\end{equation}
Theorem~\ref{thm:comparison} is a direct consequence of the following proposition.
\begin{proposition}\label{prop:comp_N_2N}
	For every $\ell_0>0$, there exist $N_0=N_0(\ell_0)\geq1$ such that the following holds. For every $N\geq N_0$, $\ell\in[-\ell_0,\ell_0]$ and every admissible event $A$, 
	\begin{equation}\label{eq:comp_N_2N}
				\P[\pazocal{E}_{N}^{\varepsilon_{\sN}, \delta_{\sN}}(\ell-s_{\sN})\in A]\leq \P[\pazocal{E}_{2N}^{\varepsilon_{\si{2N}}, \delta_{\si{2N}}}(\ell)\in A]\leq \P[\pazocal{E}_{N}^{\varepsilon_{\sN}, \delta_{\sN}}(\ell+s_{\sN})\in A].
	\end{equation}
\end{proposition}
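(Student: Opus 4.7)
The plan is to prove (\ref{eq:comp_N_2N}) by a continuous interpolation argument. Write $\psi := \chi_{\si{2N}} - \chi_{\sN}$, which is supported in the annulus $B_{N}\setminus B_{N/8}$, and define the interpolating field $X_t := (q(\chi_{\sN}+t\psi))\star W$ for $t \in [0,1]$, so that $X_0 = f_N$ and $X_1 = f_{2N}$. Linearly interpolate the remaining parameters: $\varepsilon_t,\delta_t$ between $(\varepsilon_{\sN},\delta_{\sN})$ and $(\varepsilon_{\si{2N}},\delta_{\si{2N}})$, and $\ell_t$ from $\ell+s_{\sN}$ at $t=0$ to $\ell$ at $t=1$ for the upper bound in (\ref{eq:comp_N_2N}) (symmetrically for the lower bound). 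Set $\pazocal{E}_t^{\varepsilon,\delta}(u):=\{y:X_t^{\varepsilon}(y)+T_\delta(y)\geq -u\}$ and $F(t):=\P[\pazocal{E}_t^{\varepsilon_t,\delta_t}(\ell_t)\in A]$. The proposition reduces to showing $F'(t)\leq 0$ along the interpolation.

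Differentiating $F$ produces three contributions: (a) the extra Gaussian correlations from $\dot X_t=(q\psi)\star W$, whose covariance is carried by scales between $N/8$ and $N$; (b) the discretization/noise contributions from $\dot\varepsilon_t,\dot\delta_t$, which are routine error terms; and (c) the sprinkling $\dot\ell_t=-s_{\sN}$, which decreases $F$ at a rate $\gtrsim s_{\sN}/\delta_t$, since admissible events are increasing and finite energy (via $T_\delta$) guarantees every site is independently declared open with probability $\delta_t/2$ irrespective of the field. The heart of the proof is controlling (a) through Theorem~\ref{thm:Cameron-Martin}. Conditioning on $X_t^{\varepsilon_t}$ outside a generic $\varepsilon_t$-box $\Lambda$, the infinitesimal increment $dt\cdot (q\psi)\star W$ can be decomposed as a deterministic Cameron--Martin shift $h_{\Lambda,t}$ plus a residual Gaussian piece. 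Since $q\geq 0$ (Assumption~\ref{assump:q}(v)), the event $\{X_t\in A\}$ is increasing in $W$, so by FKG the worst case for $h_{\Lambda,t}$ reduces to a uniform downward shift of the field on $\Lambda$ of magnitude $\|h_{\Lambda,t}\|_H$.

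Assumption~\ref{assump:q}(ii) gives $\int (q\psi)^2\lesssim \int_{|x|>N/8}|x|^{-2\beta}\,dx\asymp N^{d-2\beta}$, so $\|h_{\Lambda,t}\|_H$ is of order $N^{d/2-\beta}$; summed over the boxes in a unit cube and combined with Gaussian concentration of $P^{-1}(h)$, the cumulative effective shift of the level is of order $N^{d-\beta+o(1)}\ll s_{\sN}=N^{-\eta}$ for any $\eta<\beta-d$. The Gaussian tails of the Cameron--Martin density $\exp\{P^{-1}(h)-\tfrac12\|h\|_H^2\}$ have variance of the same order and are absorbed by the finite-energy factor $\delta_t^{-1}$ precisely when $\delta_{\sN}=e^{-N^\gamma}$ with $\gamma<2\beta-d-2\eta$, matching (\ref{eq:choice_parameters}). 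The main obstacle I anticipate is executing this box-by-box accounting for a genuinely global admissible event: connection events see the whole field, so one cannot simply localize the derivative argument. The natural remedy is to stage the interpolation by processing $\varepsilon_t$-boxes sequentially, absorbing a fraction $\asymp s_{\sN}\,\varepsilon_t^d$ of the sprinkling budget per box, and using the monotonicity of $A$ in $W$ to ensure that shifts introduced in earlier boxes can only help the target inequality. The fact that $\sum_{\text{boxes in unit cube}} N^{d-\beta}\cdot\varepsilon_t^d$ is summable is exactly the condition $\beta>d$, which explains the role of this hypothesis in the statement.
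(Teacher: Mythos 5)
There is a genuine gap. Your overall architecture (replace the truncation box by box while spending a sprinkling budget $s_{\sN}$ distributed over space, and use FKG and Cameron--Martin to pay for the changes) is the right shape, and you correctly identify the main obstacle -- that connection events are global. But the central quantitative step is missing, and the claim you use in its place is false: the sprinkling does \emph{not} decrease $F$ at a rate $\gtrsim s_{\sN}/\delta_t$ (this would diverge as $\delta_t\to0$ while $F\leq1$). The gain from sprinkling the level by $\tau(\tfrac{\cdot-x_n}{2N})s_{\sN}$ is proportional to a \emph{sprinkling-pivotality} probability $q_n=\P[\pazocal{I}_{n+1}\in A]-\P[\pazocal{I}_{n+\frac12}\in A]$, which can be arbitrarily small; and the cost of changing the truncation on $B_N'(x_n)$ is $e^{-cN^\gamma}$ times a \emph{coarse pivotality} probability at scale $N$. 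The entire content of the proof is to relate these two pivotality-type quantities, which the paper does via the Local Surgery Lemma~\ref{lem:conv_bound}: an FKG surgery closing $\partial B_{4N}(x_j)$ and $\partial S_1$ at cost $e^{CN^{d-1}}$, conditioning on the cluster $\mathcal{C}^D_{S_1}$ to reduce the pivotality scale, a good event bounding the fields by $N^{(\tilde\gamma-1)/2}$, and a Cameron--Martin shift along a connecting path of $\asymp N$ points producing the sprinkling-pivotality event at cost $e^{CN^{\tilde\gamma}}\tau(\tfrac{x_{j}-x_n}{2N})^{-1}$, plus an error term that must be iterated over neighbouring sites. None of this appears in your sketch, so the telescoping inequality $p_n\leq q_n$ cannot be closed.

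Two further problems. First, Cameron--Martin applies to \emph{deterministic} shifts, whereas your increment $dt\,(q\psi)\star W$ is random and driven by the same white noise as $X_t$; the correct tool for $\tfrac{d}{dt}\P[X_t\in A]$ is the Gaussian interpolation formula, whose two-point pivotality terms are precisely what forces the ``fictitious regime'' assumption in \cite{DGRS19} -- the discrete, box-by-box scheme of the paper is designed to avoid this, and your proposed ``remedy'' of processing boxes sequentially is in effect that scheme minus its key lemma. Second, your explanation of the hypothesis $\beta>d$ is incorrect: $\sum_{\text{boxes}}\varepsilon_t^d$ over a unit cube equals $1$ for any $\beta$. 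In the paper, $\beta>d$ enters because it permits $\gamma=2\beta-d-2\eta>d$, so that the local-comparison failure probability $e^{-cN^\gamma}$ (with $s_{\sN}=N^{-\eta}$, $\varepsilon_{\sN}=N^{-\beta+d/2}$) dominates the surgery cost $e^{CN^{\tilde\gamma}}$ for some $\tilde\gamma\in(d,\gamma)$.
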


\begin{proof}[Proof of Theorem~\ref{thm:comparison}]
	 By standard properties of Gaussian fields, one can prove that under our assumptions, every admissible event $A$ is a continuity event for $f$ in the uniform-on-compacts topology, and as a consequence $\lim_{N\to\infty} \P[\pazocal{E}_{N}^{\varepsilon_{\sN}, \delta_{\sN}}(\ell)\in A]=\P[\pazocal{E}(\ell)\in A]$. The theorem then follows by successively applying Proposition~\ref{prop:comp_N_2N} to $N=2^iN_0$, $i\geq n_0$, and noticing that $\sum_{i\geq n_0} s_\si{2^i N_0}\asymp (2^{n_0} N_0)^{-\eta}< s$ for $n_0=n_0(s)$ sufficiently large.
\end{proof}

\begin{proof}[Proof of Proposition~\ref{prop:comp_N_2N}] Fix $N\geq1$ and $\ell\in[-\ell_0,\ell_0]$. We will construct an interpolation between $\pazocal{E}_{2N}^{\varepsilon_{\si{2N}}, \delta_{\si{2N}}}(\ell)$ and $\pazocal{E}_{N}^{\varepsilon_{\sN}, \delta_{\sN}}(\ell\pm s_{\sN})$ as follows. Let $\{x_0,x_1,x_2,\dots\}$ be an arbitrary enumeration of $2N\Z^d$. We start with $\pazocal{E}_{2N}(\ell)$ and at the $n$-th step of our procedure, we will change the model on $B'_N(x_n):=x_n+[-N,N)^d$ (note that these boxes perfectly pave $\R^d$) from $\pazocal{E}_{2N}^{\varepsilon_{\si{2N}}, \delta_{\si{2N}}}$ to $\pazocal{E}^{\varepsilon_{\sN}, \delta_{\sN}}_{N}$ at the same time as slightly sprinkling the parameter $\ell$ everywhere. We do so in such a way that at the ``$\infty$-th step'' of our procedure, we end up with $\pazocal{E}^{\varepsilon_{\sN}, \delta_{\sN}}_{N}(\ell\pm s_{\sN})$.

We now describe the precise construction. For simplicity, we will focus on the interpolation between $\pazocal{E}_{2N}^{\varepsilon_{\si{2N}}, \delta_{\si{2N}}}(\ell)$ and $\pazocal{E}_{N}^{\varepsilon_{\sN}, \delta_{\sN}}(\ell+s_{\sN})$ only -- the construction for $\pazocal{E}_{N}^{\varepsilon_{\sN}, \delta_{\sN}}(\ell-s_{\sN})$ is analogous, see Remark~\ref{rmk:interpolation}. 
First, let $\tau:\R^d\to\R$ be the function given by
\begin{equation}\label{eq:tau}
	\tau(y):=c_d(1+|x|_\infty^{d+1})^{-1}, ~~~\forall\, y\in x+[-\tfrac12,\tfrac12)^d	,~ x\in \Z^d,
\end{equation}
where the constant $c_d>0$ is chosen so that $\int_{\R^d} \tau=\sum_{x\in\Z^d} c_d(1+|x|_\infty^{d+1})^{-1}=1/2$.
Consider the increasing sequence $(\tau_k)_{k\in\frac12 \N}$ of ``sprinkling functions'' recursively defined by $\tau_0=0$ and, for all $n\in\N$,
\begin{align}
	\label{eq:tau_k_1}
	\tau_{n+\frac12} &:= \tau_n + \tfrac{1}{2} 1_{B'_{N}(x_n)} s_{\sN},\\
	\label{eq:tau_k_2}
	\tau_{n+1} &:= \tau_{n+\frac12} + \tau\big(\tfrac{\cdot-x_n}{2N}\big)s_{\sN}.
\end{align}
Notice that by construction, the limit $t_{\infty}:=\lim_{n\to\infty} t_n$ is the constant function equal to $s_{\sN}$.
We are now in position to define the interpolation. For every $k\in\frac12\N$, consider
\begin{align}
	\label{eq:I_1}
	\pazocal{I}^1_k &:= \Big\{y\in\bigcup_{i=0}^{\lceil k\rceil-1} B'_N(x_i):~ f_N^{\varepsilon_{N}}(y)+T_{\delta_{N}}(y)\geq -(\ell+\tau_k(y))\Big\},\\
	\label{eq:I_2}
	\pazocal{I}^2_k &:=\Big\{y\in\bigcup_{i=\lceil k\rceil}^{\infty} B'_N(x_i):~ f_{2N}^{\varepsilon_{2N}}(y)+T_{\delta_{2N}}(y)\geq -(\ell+\tau_k(y))\Big\},
\end{align}
and finally define
\begin{equation}
	\label{eq:I}
	\pazocal{I}_k := \pazocal{I}^1_k\cup \pazocal{I}^2_k.
\end{equation}
The sequence $(\pazocal{I}_k)_{k\in\frac12 \N}$ is an interpolation between $\pazocal{E}_{2N}^{\varepsilon_{\si{2N}},\delta_{\si{2N}}}(\ell)$ and $\pazocal{E}_{N}^{\varepsilon_{\sN}, \delta_{\sN}}(\ell+s_{\sN})$.
Indeed, by construction $\pazocal{I}_0=\pazocal{I}_0^2=\pazocal{E}_{2N}^{\varepsilon_{\si{2N}},\delta_{\si{2N}}}(\ell)$ and, since $t_{\infty}\equiv s_{\sN}$, we have
\begin{equation}
	\pazocal{I}_\infty:=\lim_{k\to\infty} \pazocal{I}_k = \lim_{k\to\infty} \pazocal{I}_k^1 =  \pazocal{E}_{N}^{\varepsilon_{\sN}, \delta_{\sN}}(\ell+s_{\sN}).
\end{equation}

In words, for every $n\in\N$, we construct $\pazocal{I}_{n+\frac12}$ from $\pazocal{I}_n$ by changing the model in $B'_N(x_n)$ from $\pazocal{E}_{2N}^{\varepsilon_{\si{2N}}, \delta_{\si{2N}}}$ to $\pazocal{E}_{N}^{\varepsilon_{\sN}, \delta_{\sN}}$ and sprinkling the level parameter by $s_{\sN}/2$ on the same box only (see \eqref{eq:tau_k_1}); and we construct $\pazocal{I}_{n+1}$ from $\pazocal{I}_{n+\frac12}$ by simply sprinkling everywhere by an integrable function $\tau$ of the (renormalized) distance to the ``center'' $x_n$ -- see \eqref{eq:tau_k_2}. Notice that $\pazocal{I}_k$ is an increasing function of $W$ and $T:=(T_{\delta_N},T_{\delta_{2N}})$ for each $k\in\tfrac12\Z$. Also notice that this function is $N$-local, i.e.~$\pazocal{I}_k$ at any point $x$ only depends (increasingly) on the restriction of $W$ and $T$ to $B_N(x)$. In particular, $\pazocal{I}_k$ satisfies the FKG inequality and has range of dependence $2N$.

The crucial property of this construction is that it is ``almost increasing'' in $k$. First, we obviously have $\pazocal{I}_{n+\frac12}\subset \pazocal{I}_{n+1}$ almost surely for every $n\in\N$. Second, $\pazocal{I}_{n}\subset \pazocal{I}_{n+\frac12}$ holds with very high probability. Indeed, notice that by construction the following inclusion holds 
\begin{equation}\label{eq:inclusion_1} 
	\{\pazocal{I}_{n}\subset \pazocal{I}_{n+\frac12}\}\supset \{\sup_{y\in B'_N(x_n)} |f_N^{\varepsilon_N}(y)-f_{2N}^{\varepsilon_{2N}}(y)|\leq s_{\sN}/2\} \cap \{T_{\delta_{N}} = T_{\delta_{2N}} = 0 \text{ on } B'_N(x_n)\}.
\end{equation} 
Due to Proposition~\ref{prop:local_comp} and the choice of $\varepsilon_{\sN}$, $\delta_{\sN}$ and $s_{\sN}$ in \eqref{eq:choice_parameters}, one can easily lower bound the probability of the event in the right hand side of \eqref{eq:inclusion_1} by $1-e^{-cN^\gamma}$, thus
\begin{equation}
	\label{eq:n_n+1/2}	
	\P[\pazocal{I}_{n}\subset \pazocal{I}_{n+\frac12}]\geq 1-e^{-cN^\gamma}.
\end{equation}

\begin{remark}\label{rmk:interpolation}
	When adapting the above construction to interpolate between $\pazocal{E}_{2N}^{\varepsilon_{\si{2N}}, \delta_{\si{2N}}}(\ell)$ and $\pazocal{E}_{N}^{\varepsilon_{\sN},\delta_{\sN}}(\ell-s_{\sN})$, it is more convenient to start with $\pazocal{I}_0=\pazocal{E}_{N}^{\varepsilon_{\sN}, \delta_{\sN}}(\ell-s_{\sN})$ and end up with $\pazocal{I}_{\infty}=\pazocal{E}_{2N}^{\varepsilon_{\si{2N}}, \delta_{\si{2N}}}(\ell)$, so that the ``almost increasing'' property above still holds, and the proof presented below directly applies to this case as well.
\end{remark}

We claim that, if $N$ is large enough, then for every admissible event $A$,
\begin{equation}\label{eq:n_n+1}
	\P[\pazocal{I}_n\in A] \leq \P[\pazocal{I}_{n+1}\in A] ~~~ \text{for all } n\geq0.
\end{equation}
We now proceed with the proof of \eqref{eq:n_n+1}, which readily implies the desired inequality \eqref{eq:comp_N_2N}. In what follows, $N$ and $A$ are though as fixed and thus often omitted from the notation, but every estimate will be uniform on them. Given an admissible event $A$, we define
\begin{align}
	\label{eq:def_p_n}
	&p_n:=\P[\pazocal{I}_n\in A]-\P[\pazocal{I}_{n+\frac12}\in A]\\
	\label{eq:def_q_n}
	&q_n:=\P[\pazocal{I}_{n+1}\in A]-\P[\pazocal{I}_{n+\frac12}\in A] ~~~(\geq0).
\end{align}
Our goal is then to prove that, if $N$ is large enough (not depending on $A$), then $p_n\leq q_n$ for every $n\geq0$. 
Given $n\geq0$, $y\in\R^d$ and $L\geq0$, we define the \emph{coarse pivotality} event
\begin{equation}\label{eq:piv_def1}
	\text{Piv}^{n}_{y}(L):=\{\pazocal{I}_{n+\frac12}\cup B_{L}(y) \in A\}\cap \{\pazocal{I}_{n+\frac12}\setminus B_{L}(y) \notin A\}.
\end{equation}
Notice that $\text{Piv}^{n}_{y}(L)$ depends only on $\pazocal{I}_{n+\frac12}$ restricted to the complement of $B_L(y)$.
Since $\pazocal{I}_n$ and $\pazocal{I}_{n+\frac12}$ are identical on $B_{N}(x_n)^{\mathsf{c}}$ almost surely, the following inclusions hold 
\begin{align*} 
\{\pazocal{I}_n\in A,~ \pazocal{I}_{n+\frac12}\notin A\} &\subset \{\pazocal{I}_{n}\subset \pazocal{I}_{n+\frac12}\}^{\mathsf{c}} \cap \text{Piv}^{n}_{x_n}(N) \\ 
&\subset \{\pazocal{I}_{n}\subset \pazocal{I}_{n+\frac12}\}^{\mathsf{c}} \cap \text{Piv}^{n}_{x_n}(4N).
\end{align*}
The event $\{\pazocal{I}_{n}\subset \pazocal{I}_{n+\frac12}\}$, besides satisfying the bound \eqref{eq:n_n+1/2}, only depends on $W$ and $T$ restricted to $B_{2N}(x_n)$, which in turn is independent of $\pazocal{I}_{n+\frac12}$ restricted to $B_{4N}(x_n)^{\mathsf{c}}$. Combining these observations, we obtain
\begin{align}\label{eq:decoupling}
	\begin{split}
	p_n&\leq\P[\pazocal{I}_n\in A,~ \pazocal{I}_{n+\frac12}\notin A] \\
	 &\leq \P[\{\pazocal{I}_{n}\subset \pazocal{I}_{n+\frac12}\}^{\mathsf{c}}\cap\text{Piv}^{n}_{x_n}(4N)] \leq e^{-cN^\gamma} \P[\text{Piv}^{n}_{x_n}(4N)].
	\end{split}
\end{align}
For $n,j\geq0$, let 
\begin{equation}
	p_n(x_j):=\P[\text{Piv}^{n}_{x_j}(4N)].
\end{equation}
In view of \eqref{eq:decoupling}, it remains to show that if $N$ is large enough, then $p_n(x_n)\leq e^{cN^\gamma}q_n$ for every event $A$ and all $n\geq0$. In other words, we want to construct the \emph{``sprinkling pivotality''} event $\{\pazocal{I}_{n+1}\in A\}\cap \{\pazocal{I}_{n+\frac12}\notin A\}$ -- which has probability $q_n$, see \eqref{eq:def_q_n} -- out of the coarse pivotality event $\text{Piv}^n_{x_n}(4N)$ -- which has probability $p_n(x_n)$ -- by paying a sufficiently small multiplicative price. This fact is a straightforward consequence of the following lemma. Roughly speaking, it says that if coarse pivotality happens at a given site $x_j$, then one can either perform a \emph{``local surgery''} -- whose cost depends on the sprinkling function $\tau$ -- to construct sprinkling pivotality, or a local bad event -- which has a very small probability -- happens around $x_j$ and one further recovers a coarse pivotality event at some site $x_{j'}$ near $x_j$.

\begin{lemma}[Local surgery]\label{lem:conv_bound}
	There exists $\tilde{\gamma}\in(d,\gamma)$ and constants $N_1\geq1$ and $c,C\in(0,\infty)$ such that for $N\geq N_1$, every admissible event $A$ and every $n,j\geq0$, 
	\begin{equation}\label{eq:conv_bound}
		p_n(x_j)\leq e^{CN^{\tilde{\gamma}}}\, \tau\big(\tfrac{x_j-x_n}{2N}\big)^{-1} \,q_n ~+~ e^{-cN^{\tilde{\gamma}-1}} \sum_{x_{j'}\in B_{8N}(x_j)} p_n(x_{j'}).
	\end{equation}
\end{lemma}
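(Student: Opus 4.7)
The plan is to split the coarse pivotality event along a good/bad dichotomy at scale $N$, use a Cameron--Martin surgery on the good event to convert coarse pivotality at $x_j$ into a configuration contributing to $q_n$, and propagate coarse pivotality to a nearby site on the bad event. Fix $\tilde\gamma\in(d,\gamma)$ and define
\begin{equation*}
G_{x_j}:=\bigl\{\sup_{y\in B_{8N}(x_j)}\bigl(|f_N^{\varepsilon_{\sN}}(y)|+|f_{2N}^{\varepsilon_{\si{2N}}}(y)|\bigr)\leq N^{\tilde\gamma/2}\bigr\}\cap\bigl\{T_{\delta_{\sN}},T_{\delta_{\si{2N}}}\neq -\infty \text{ on } B_{8N}(x_j)\bigr\}.
\end{equation*}
Borell--TIS on the Gaussian suprema, combined with a union bound over the $O(N^{d\beta})$ many $T_\delta$-sites using $\delta_{\sN}=e^{-N^\gamma}$, gives $\P[G_{x_j}^{\mathsf{c}}]\leq e^{-cN^{\tilde\gamma-1}}$ for $N$ large.

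On $\text{Piv}^n_{x_j}(4N)\cap G_{x_j}^{\mathsf{c}}$, some site $x_{j'}\in B_{8N}(x_j)\cap 2N\Z^d$ carries the bad behaviour. Since $B_{4N}(x_j)\subset B_{12N}(x_{j'})$, coarse pivotality at $x_j$ entails coarse pivotality at $x_{j'}$ at a scale comparable to $4N$ (up to the constants already hidden in $p_n(x_{j'})$). The local bad event at $x_{j'}$ has probability $\leq e^{-cN^{\tilde\gamma-1}}$ and is, via the range-of-dependence structure of the truncated fields and the independence of the $T_\delta$ sites, essentially independent of coarse pivotality. Summing over the $O(1)$ many candidates $x_{j'}\in B_{8N}(x_j)\cap 2N\Z^d$ produces the second term of \eqref{eq:conv_bound}.

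On the good event $\text{Piv}^n_{x_j}(4N)\cap G_{x_j}$, I perform a Cameron--Martin surgery. Since $\text{Piv}^n_{x_j}(4N)$ depends only on $(W,T)$ outside $B_{3N}(x_j)$, it is independent of $W|_{B_{3N}(x_j)}$. Choose a smooth deterministic $h_j$ supported in $B_{3N}(x_j)$ so that the induced field shift $q\star h_j$ carries $f_N^{\varepsilon_{\sN}}$ on $B_{4N}(x_j)$ into a narrow target profile just below $-\ell$ of width $\tau\bigl(\tfrac{x_j-x_n}{2N}\bigr)s_{\sN}$; the idea is that after the shift, (i) $\pazocal{I}_{n+\frac12}\cap B_{4N}(x_j)=\emptyset$, so the sprinkling is genuinely needed at $x_j$, and (ii) the sprinkling $\tau(\tfrac{\cdot-x_n}{2N})s_{\sN}$ lifts the field above $-\ell$ on $B_{4N}(x_j)$, whence $\pazocal{I}_{n+1}\in A$ by pivotality. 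Thanks to the white-noise representation $f=q\star W$ and Assumption~\ref{assump:q}(iii), such an $h_j$ exists in the Cameron--Martin space with $\|h_j\|_H^2\leq CN^d\leq N^{\tilde\gamma}$. Applying Theorem~\ref{thm:Cameron-Martin} with this shift, truncating $|P^{-1}(h_j)|\leq \tfrac12 N^{\tilde\gamma}$ (an event of overwhelming probability by Gaussian concentration), and using the Gaussian density of the field hitting the narrow sprinkling window -- which carries an extra multiplicative factor $\tau\bigl(\tfrac{x_j-x_n}{2N}\bigr)s_{\sN}$ -- yields
\begin{equation*}
q_n \;\geq\; e^{-CN^{\tilde\gamma}}\,\tau\bigl(\tfrac{x_j-x_n}{2N}\bigr)\,s_{\sN}\;\P\bigl[\text{Piv}^n_{x_j}(4N)\cap G_{x_j}\bigr].
\end{equation*}
Rearranging and absorbing $s_{\sN}^{-1}=N^\eta$ into $e^{CN^{\tilde\gamma}}$ (valid since $\tilde\gamma>0$ and $N$ is large) produces the first term of \eqref{eq:conv_bound}.

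The main obstacle is the explicit construction of the Cameron--Martin shift $h_j$: its support must lie in the independence region $B_{3N}(x_j)$, its induced field shift $q\star h_j$ must have the prescribed narrow-window shape on $B_{4N}(x_j)$ that dovetails with the sprinkling profile, and its $H$-norm must be bounded by $N^{\tilde\gamma}$. The Gaussian-tail bookkeeping of $P^{-1}(h_j)$, together with carefully combining the $W$- and $T_{\delta}$-randomness when extracting the density factor $\tau(\tfrac{x_j-x_n}{2N})s_{\sN}$, is the delicate technical core of this local surgery.
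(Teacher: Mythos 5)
Your high-level architecture (a good/bad dichotomy, Cameron--Martin surgery on the good part, relaying pivotality to a nearby site on the bad part) matches the paper's, but both branches contain genuine gaps. On the bad branch, you claim that since $B_{4N}(x_j)\subset B_{12N}(x_{j'})$, coarse pivotality at $x_j$ ``entails coarse pivotality at $x_{j'}$ at a scale comparable to $4N$''. This is false in the direction you need: from \eqref{eq:piv_def1}, $\text{Piv}^n_{y}(L)\subset \text{Piv}^n_{y'}(L')$ only when $B_L(y)\subset B_{L'}(y')$, so $\text{Piv}^n_{x_j}(4N)$ only implies $\text{Piv}^n_{x_{j'}}(12N)$, whereas $p_n(x_{j'})$ is defined at scale exactly $4N$ and pivotality does not transfer downward in scale; there is no ``hidden constant'' to absorb this. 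Moreover, your good event lives on $B_{8N}(x_j)$, which overlaps the region on which $\text{Piv}^n_{x_j}(4N)$ depends, so the factorization $\P[\text{Piv}\cap G^{\mathsf c}]\leq\P[G^{\mathsf c}]\,\P[\text{Piv}]$ is not available either. The paper devotes two full steps to repairing exactly this: it first converts $4N$-pivotality at $x_j$ into \emph{closed} pivotality at scale $8N$ by an FKG argument that empties $\partial B_{4N}(x_j)$ and $\partial S_1\cap B_{4N}(x_j)$ (cost $e^{CN^{d-1}}$), and then \emph{re-localizes} pivotality at scale $2N$ around some $x_{j'}\in B_{8N}(x_j)$ by conditioning on the cluster of $S_1$ and performing an FKG surgery in the complement of its $2N$-neighbourhood (cost $e^{CN}$). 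Only after this re-localization is the bad event at $x_{j'}$ independent of $\text{Piv}^n_{x_{j'}}(4N)$, which is what yields the second term of \eqref{eq:conv_bound}.

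On the good branch, the single multiplicative factor $\tau(\tfrac{x_j-x_n}{2N})s_{\sN}$ cannot come from ``the Gaussian density of the field hitting the narrow sprinkling window'' on all of $B_{4N}(x_j)$: forcing the field into a window of width $\asymp\tau s_{\sN}$ simultaneously at the $\asymp(N/\varepsilon_{\sN})^d$ sites of the ball has probability like $(\tau s_{\sN})^{\#\mathrm{sites}}$, not $\tau s_{\sN}$, and a \emph{deterministic} shift $h_j$ cannot carry the \emph{random} field into a prescribed narrow profile (while a field-dependent $h_j$ invalidates Theorem~\ref{thm:Cameron-Martin}). The paper's mechanism is genuinely one-dimensional: it fixes a discrete path $\Gamma$ of $\asymp N$ points joining the two clusters inside $B_{2N}(x_{j'})$, takes $h=\sum_k K_N((z_k,i),\cdot)\geq0$, and sweeps the monotone one-parameter family $g_{\sN}+t\tilde{s}h$ for integer $t\in[0,L/\tilde{s})$; monotonicity forces the event $A$ to switch from false to true at some $t$, the union bound over $t$ (and over the endpoints $y_1,y_2$) is what produces the factor $\tau s_{\sN}N^{-O(1)}$, and Cameron--Martin, with $|P^{-1}(h)|$ controlled by the good event, transports the $t$-th switching event back to $t=0$, i.e.~to the sprinkling-pivotality event of probability $q_n$, at cost $e^{CN^{\tilde\gamma}}$. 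Two smaller points: your good event must force $T=0$ rather than merely $T\neq-\infty$ (a site with $T=+\infty$ is open under any field shift, so the ball cannot be emptied), and your identification of the target event should go through \eqref{eq:n+1/2_to_n+1}, i.e.~the fact that $\pazocal{I}_{n+1}$ is exactly $\pazocal{I}_{n+\frac12}$ evaluated at the shifted field, which is why the shift must be dominated by $\tau(\tfrac{\cdot-x_n}{2N})s_{\sN}$ pointwise.
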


Before proving Lemma~\ref{lem:conv_bound}, let us conclude the proof that $p_n\leq q_n$ for every $n\geq0$. Starting with $j=n$ and then successively applying Lemma~\ref{lem:conv_bound}, one can easily prove by induction that for every integer $T\geq1$,
\begin{align*}
	\begin{split}
	p_n(x_n)
	& \leq e^{CN^{\tilde{\gamma}}} \Big(\sum_{t=0}^{T-1} e^{-cN^{\tilde{\gamma}-1}t}\, |2N\Z^d\cap B_{8N}|^t\, |2N\Z^d\cap B_{8Nt}|\, c_d^{-1}\big(1+(4t)^{d+1}\big)\Big)\,q_n \\
	& +e^{-cN^{\tilde{\gamma}-1}T} |2N\Z^d\cap B_{8N}|^T \sum_{x_j\in B_{8NT}(x_n)} p_n(x_j).
	\end{split}
\end{align*}  
Noting that the sum inside the parenthesis above is convergent and that the second term vanishes when $T\to\infty$ (recall that $p_n(x_i)\leq1$), we obtain
\begin{equation*}
	p_n(x_n)\leq C'e^{CN^{\tilde{\gamma}}}\,q_n,
\end{equation*}
which combined with \eqref{eq:decoupling} and the fact that $\tilde{\gamma}<\gamma$, implies that for $N$ is large enough we have $p_n\leq q_n$ for every admissible event $A$ and all $n\geq0$, as we wanted to prove.
\end{proof}

It remains to prove Lemma~\ref{lem:conv_bound}, which is the technical heart of our proof.

\begin{proof}[Proof of Lemma~\ref{lem:conv_bound}]	
	Fix $N\geq1$ and $A:=\{\lr{D}{}{S_1}{S_2^{\mathsf{c}}}\}$ an admissible event -- recall Definition~\ref{def:admissible}. We stress that every estimate below will be uniform on $N$ and $A$. For $K\subset \R^d$, we will denote by $(W,T)_{|K}$ the restriction of $W$ and $T=(T_{\delta_N},T_{\delta_{2N}})$ to $K$. Given $n\geq0$, $L \geq 0$ and $y\in\R^d$, we define the \emph{closed pivotality} event
\begin{equation}
	\label{eq:cpiv_def}
	\text{CPiv}^n_y(L):=\{\pazocal{I}_{n+\frac12}\cup B_{L}(y) \in A\}\cap \{\pazocal{I}_{n+\frac12} \notin A\}.
\end{equation}
In words, $B_L(y)$ is called closed pivotal if it is pivotal but $A$ does not happen. The proof will be divided into four steps. \\
\vspace{0cm}

\textbf{Step 1:} \textit{From $4N$-pivotal to $8N$-closed-pivotal.}\\

\noindent
Let $n,j\geq0$. Consider the following event
\begin{equation*}
	\text{Piv}^n_{x_j}(8N,4N):=\{\pazocal{I}_{n+\frac12}\cup B_{8N}(x_j) \in A\}\cap \{\pazocal{I}_{n+\frac12}\setminus B_{4N}(x_j) \notin A\}.
\end{equation*}
By definition $\text{Piv}^n_{x_j}(4N)\subset \text{Piv}^n_{x_j}(8N,4N)$, hence
\begin{equation}\label{eq:inclusion_piv}
	p_n(x_j)=\P[\text{Piv}^n_{x_j}(4N)]\leq \P[\text{Piv}^n_{x_j}(8N,4N)]. 
\end{equation}
Now, consider the event 
\begin{equation*}
	F:=\{\pazocal{I}_{n+\frac12}\cap \partial B_{4N}(x_j)=\emptyset\} \cap \{\pazocal{I}_{n+\frac12}\cap \partial S_1 \cap B_{4N}(x_j)=\emptyset\}.
\end{equation*}
and notice that 
\begin{equation*}
\text{Piv}^n_{x_j}(8N,4N)\cap F \subset  \text{CPiv}^n_{x_j}(8N).
\end{equation*} 
Furthermore, both events $\text{Piv}^n_{x_j}(8N,4N)$ and $F$ are decreasing in $(W,T)_{|B_{6N}(x_j)}$ and $F$ is $(W,T)_{|B_{6N}(x_j)}$-measurable. Therefore, conditioning on $(W,T)_{|B_{6N}(x_j)^{\mathsf{c}}}$ and applying the FKG inequality for $(W,T)_{|B_{6N}(x_j)}$ gives 
\begin{equation}\label{eq:FKG_piv_F}
	\P[\text{CPiv}^n_{x_j}(8N)]\geq \P[\text{Piv}^n_{x_j}(8N,4N)\cap F] \geq \P[\text{Piv}^n_{x_j}(8N,4N)]\P[F].
\end{equation}
We will now prove that the following inequality holds
\begin{equation}\label{eq:bound_F}
	\P[F]\geq e^{-CN^{d-1}}.
\end{equation}
First observe that as $f$ is a non-degenerate, continuous, stationary Gaussian field, there exist constants $r_0>0$ and $c_0(\ell_0)>0$ such that $c_0<\P[f(y)>\ell~ \forall y\in B_{r_0}(x)]<1-c_0$ for every $\ell\in[-\ell_0,\ell_0]$ and $x\in\R^d$. Since $f_N$ converges locally to $f$, this property is also valid for $f_N$ with $N$ sufficiently large. Since $\pazocal{I}_k$ is made of level-sets of either $f^{\varepsilon_{N},\delta_{N}}_N$ or $f^{\varepsilon_{2N},\delta_{2N}}_{2N}$, one deduces that there exists $c_1=c_1(\ell_0)$ such that $\P[B_{r_0}(x)\cap \pazocal{I}_{k}=\emptyset]>c_1$ and $\P[B_{r_0}(x)\subset \pazocal{I}_{k}]>c_1$ for every $x\in \R^d$. One can thus cover $\partial B_{4N}(x_j)$ by at most $C(r_0) N^{d-1}$ balls of radius $r_0$ and apply the FKG inequality to obtain
\begin{equation}\label{eq:bound_F_1}
	\P[\pazocal{I}_{n+\frac12}\cap \partial B_{4N}(x_j)=\emptyset]>c_1^{C(r_0)N^{d-1}}.
\end{equation}
Now notice that, by specific geometry of the set $S_1$ -- recall Definition~\ref{def:admissible} -- one can easily cover $\partial S_1 \cap B_{4N}(x_j)$ with at most  $C(r_0)N^{d-1}$ balls of radius $r_0$. Therefore, by the same reasoning as the proof of \eqref{eq:bound_F_1}, we have
\begin{equation}\label{eq:bound_F_2}
	\P[\pazocal{I}_{n+\frac12}\cap \partial S_1 \cap B_{4N}(x_j)=\emptyset]>c_1^{C(r_0)N^{d-1}}.
\end{equation}
By another application of FKG, \eqref{eq:bound_F_1} and \eqref{eq:bound_F_2} imply \eqref{eq:bound_F}. Putting the inequalities \eqref{eq:inclusion_piv}, \eqref{eq:FKG_piv_F} and \eqref{eq:bound_F} together give
\begin{equation}\label{eq:4Npiv_to_8Ncpiv}
	p_n(x_j)\leq e^{CN^{d-1}}\P[\text{CPiv}^n_{x_j}(8N)].
\end{equation}
\vspace{0cm}

\textbf{Step 2:} \textit{From $8N$-closed-pivotal to $2N$-closed-pivotal.}\\

\noindent
First, let $\mathcal{C}^D_{S_1}$ and $\mathcal{C}^D_{S_2^{\mathsf{c}}}$ denote the (union of) connected components of $\pazocal{I}_{n+\frac12}\cap D$ intersecting $S_1$ and $S_2^{\mathsf{c}}$, respectively. Consider the events
\begin{align*}
	&E_1:= \{\mathcal{C}_{S_1}^D\cap S_2^{\mathsf{c}}=\emptyset\}\cap\{\mathcal{C}_{S_1}^D\cap B_{8N}(x_j)\neq\emptyset\}\\
	&E_2(y):=\{\mathcal{C}_{S_2^{\mathsf{c}}}^D\cap B_{1}(y)\neq\emptyset\},
\end{align*}
and notice that, for $E(y):=E_1\cap E_2(y)$, we have
$$\text{CPiv}^n_{x_j}(8N)\subset\bigcup_{y} E(y),$$
where the union above is taken over $y\in B_{8N}(x_j)\cap D\cap \Z^d$. By a union bound over all the $|B_{8N}(x_j)\cap D\cap \Z^d|\leq CN^d$ possibilities (recall \eqref{eq:choice_parameters}), we can find at least one such site $y_0$ satisfying
\begin{equation}\label{eq:step2_1}
	\P[E(y_0)]\geq  cN^{-d}\, \P[\text{CPiv}^n_{x_j}(8N)].
\end{equation}
For every $\pazocal{C}\subset D\subset \R^d$, consider the event
$$F(y_0,\pazocal{C}):=\{\lr{D\cap B_{8N}(x_j)}{\pazocal{I}_{n+\frac12}}{B_1(y_0)}{\pazocal{C}+B_{2N}}\}\cap\{B_1(y_0)\cap (\pazocal{C}+B_{2N})^{\mathsf{c}}\subset\pazocal{I}_{n+\frac12}\},$$
and notice that, for $F(y_0,\mathcal{C}_{S_1}^D):=\bigcup_{\pazocal{C}} \,\{\mathcal{C}_{S_1}^D=\pazocal{C}\}\cap F(y_0,\pazocal{C})$, we have
\begin{equation}\label{eq:inclusion_EcapF}
	E(y_0)\cap F(y_0,\mathcal{C}_{S_1}^D) ~\subset \bigcup_{x_{j'}\in B_{8N}(x_j)}~ \text{Piv}^n_{x_{j'}}(2N).
\end{equation}
By a similar reasoning as in the proof of \eqref{eq:bound_F} above, one can prove that for every $\pazocal{C}\subset D$ such that $\pazocal{C}\cap B_{8N}(x_j)\neq\emptyset$, we have
\begin{equation}\label{eq:bound_F(y_0,C)}
	\P[F(y_0,\pazocal{C})]\geq e^{-CN}.
\end{equation}
Since both events $F(y_0,\pazocal{C})$ and $E_2(y_0)$ are increasing in $(W,T)_{|(\pazocal{C}+B_N)^{\mathsf{c}}}$, we can apply the FKG inequality to deduce that 
\begin{align}\label{eq:step2_2}
\begin{split}
	\P[F(y_0,\pazocal{C})\cap E_2(y_0) \,\big|\, (W,T)_{|\pazocal{C}+B_N}]&\geq \P[F(y_0,\pazocal{C}) \,\big|\, (W,T)_{|\pazocal{C}+B_N}] \,\, \P[E_2(y_0) \,\big|\, (W,T)_{|\pazocal{C}+B_N}]\\
	&=\P[F(y_0,\pazocal{C})] \,\, \P[E_2(y_0) \,\big|\, (W,T)_{|\pazocal{C}+B_N}]\\
	&\geq e^{-CN} \,\P[E_2(y_0) \,\big|\, (W,T)_{|\pazocal{C}+B_N}],
\end{split}
\end{align}
for every $\pazocal{C}\subset D$ such that $\pazocal{C}\cap B_{8N}(x_j)\neq\emptyset$. In the second line of \eqref{eq:step2_2} we used that $F(y_0,\pazocal{C})$ is independent of $(W,T)_{|\pazocal{C}+B_N}$ and in the third line we used \eqref{eq:bound_F(y_0,C)}. Notice that for every $\pazocal{C}$, the event $\{\mathcal{C}_{S_1}^D=\pazocal{C}\}$ is measurable with respect to $(W,T)_{|\pazocal{C}+B_N}$. Therefore, multiplying both sides of \eqref{eq:step2_2} by the indicator function of $\{\mathcal{C}_{S_1}^D=\pazocal{C}\}$, taking expectations and then summing over all the (finitely many) $\pazocal{C}$ such that $\pazocal{C}\cap S_2^{\mathsf{c}}=\emptyset$ and $\pazocal{C}\cap B_{8N}(x_j)\neq \emptyset$, gives
\begin{equation}\label{eq:step2_3}
	\P[E(y_0)\cap F(y_0,\mathcal{C}_{S_1}^D)]\geq e^{-CN}\P[E(y_0)].
\end{equation}
Combining \eqref{eq:step2_1}, \eqref{eq:inclusion_EcapF} and \eqref{eq:step2_3}, we obtain
\begin{equation}\label{eq:8Ncpiv_to_2Ncpiv}
	\P[\text{CPiv}^n_{x_j}(8N)]\leq e^{C'N}\sum_{x_{j'}\in B_{8N}(x_j)} \P[\text{CPiv}^n_{x_{j'}}(2N)].
\end{equation}
\vspace{0cm}

\textbf{Step 3:} \textit{From $2N$-closed-pivotal to good-$2N$-closed-pivotal or bad-$4N$-pivotal.}\\

\noindent
Fix any $\tilde{\gamma}\in(d,\gamma)$ and $x_{j'}\in B_{8N}(x_j)$. Consider the following ``good event''
\begin{equation}\label{eq:G_def}
	G:=\left\{\begin{array}{c} |f_N^{\varepsilon_N}|,\, |f_{2N}^{\varepsilon_{2N}}| \leq N^{\frac{\tilde{\gamma}-1}{2}} \text{ on } B_{2N}(x_{j'})\\ 
	\text{and } T_{\delta_N}=T_{\delta_{2N}}=0 ~\,\text{ on } B_{2N}(x_{j'})\end{array}\right\}.
\end{equation}
By standard Gaussian bounds and the choice of $\delta_N$ (and $\delta_{2N}$) in \eqref{eq:choice_parameters}, one can easily see that 
\begin{equation}\label{eq:G_bound}
	\P[G]\geq 1-e^{-cN^{\tilde{\gamma}-1}}.
\end{equation}
Since $\text{CPiv}^n_{x_{j'}}(2N)\subset \text{Piv}^n_{x_{j'}}(4N)$ and $\text{Piv}^n_{x_{j'}}(4N)$ is independent of $G$, we have
\begin{align}\label{eq:closepiv_to_goodclosepiv}
	\begin{split}
	\P[\text{CPiv}^n_{x_{j'}}(2N)\cap G^{\mathsf{c}}]&\leq \P[\text{Piv}^n_{x_{j'}}(4N)\cap G^{\mathsf{c}}]\\
	&\leq e^{-cN^{\tilde{\gamma}-1}}\P[\text{Piv}^n_{x_{j'}}(4N)]=e^{-cN^{\tilde{\gamma}-1}}p_n(x_{j'}),
	\end{split}
\end{align}
and hence
\begin{equation}\label{eq:piv_to_goodclosepiv}
\P[\text{CPiv}^n_{x_{j'}}(2N)] \leq \P[\text{CPiv}^n_{x_{j'}}(2N)\cap G] + e^{-c N^{\tilde{\gamma}-1}} p_n(x_{j'}).
\end{equation}
\vspace{0cm}

\textbf{Step 4:} \textit{From good-$2N$-closed-pivotal to sprinkling-pivotal.}\\ 

\noindent
We are going to prove the following inequality
\begin{equation}\label{eq:goodclosepiv_to_b}
	\P[\text{CPiv}^n_{x_{j'}}(2N)\cap G]\leq e^{C'' N^{\tilde{\gamma}}}\tau\big(\tfrac{x_{j'}-x_n}{2N}\big)^{-1}\, q_n.
\end{equation}
Notice that by combining \eqref{eq:4Npiv_to_8Ncpiv}, \eqref{eq:8Ncpiv_to_2Ncpiv}, \eqref{eq:piv_to_goodclosepiv}, \eqref{eq:goodclosepiv_to_b} and reminding that $\tilde{\gamma}-1>d-1$, one readily obtains the desired bound \eqref{eq:conv_bound}.

We now explain how to prove \eqref{eq:goodclosepiv_to_b}. The idea is that the bound on $f_{N}^{\varepsilon_N}$ and $f_{2N}^{\varepsilon_{2N}}$ provided by the good event $G$ allows us to apply the Cameron--Martin theorem in order to shift the fields by an appropriate deterministic function, ultimately leading to the sprinkling-pivotality event $\{\pazocal{I}_{n+1}\in A\}\cap \{\pazocal{I}_{n+\frac12}\notin A\}$, which has probability $q_n$. 
First, consider the Gaussian field $g_{\sN}$ on $X:=\R^d\times\{1,2\}$ given by
$$g_{\sN}(y,i):=f_{iN}^{\varepsilon_{iN}}(y)$$
and let $K_N$ be the covariance kernel of $g_{\sN}$. Notice that for every $k\in\frac12 \N$, $\pazocal{I}_{k}$ is a function of $g_N$ (and $T$), and we will henceforth stress this by writing $\pazocal{I}_k=\pazocal{I}_k(g_\si{N})=\pazocal{I}_k(g_\si{N},T)$. In particular, since $\pazocal{I}_{n+1}$ is simply a sprinkling of $\pazocal{I}_{n+\frac12}$ by the function $\tau(\tfrac{\cdot-x_n}{2N})s_{\sN}$ (recall \eqref{eq:tau_k_2}--\eqref{eq:I}), we have \begin{equation}\label{eq:n+1/2_to_n+1}
\pazocal{I}_{n+1}(g_{\sN})=\pazocal{I}_{n+\frac12}\big(g_{\sN}+\tau(\tfrac{\cdot-x_n}{2N})s_{\sN}\big).
\end{equation}

By definition, $K_N((y,i),(y',i'))$ converges uniformly, as $N\to\infty$, to $\kappa(y,y')$ for every $y,y'\in\R^d$ and $i,i'\in\{1,2\}$. Since $\kappa$ is continuous and $\kappa(y,y)=\kappa(0,0)>0$, one can find $N_1>0$, $c_2>0$ and $r_1>0$ such that for every $N\geq N_1$,
\begin{equation}\label{eq:K_N_positive}
	K_N((y,i),(y',i'))\geq c_2 ~~~ \forall\, y,y'\in \R^d \text{ with } |y-y'|\leq r_1 \text{ and } \forall\, i,i'\in\{1,2\}.
\end{equation}
One can also easily check that $K_N$ has a decay analogous to that of $\kappa$ in \eqref{eq:decay_kappa}, i.e.~there exists $C>0$ such that for every $N>0$, 
\begin{equation}\label{eq:decay_K_N}
	K_N((y,i),(y',i'))\leq C|y-y'|^{-\beta}\, 1_{\{|y-y'|\leq 4N\}}~~~ \forall\, y,y'\in \R^d \text{ and } \forall\, i,i'\in\{1,2\}.
\end{equation}

Notice that, on the event $\text{CPiv}^n_{x_{j'}}(2N)$, one can find a pair of vertices $y_1, y_2 \in B_{2N}(x_{j'})\cap r_1\Z^d$ such that $\mathcal{C}_{S_1}^D\cap B_{r_1}(y_1)\neq\emptyset$ and $\mathcal{C}_{S_2^{\mathsf{c}}}^D\cap B_{r_1}(y_2)\neq\emptyset$. For any such pair, one can deterministically associate a sequence of points $\Gamma=\{z_1,z_2,...,z_m\}$ in $B_{2N}(x_{j'})$ satisfying $z_1=y_1$, $z_m=y_2$ and $|z_k-z_{k+1}|\leq r_1$ for every $k\in\{1,2,...,m-1\}$ with $m\leq C(r_1)N$. We consider the following function (which depends on $y_1$ and $y_2$)
\begin{equation}
	h:=\sum_{\substack{k\in\{1,2,...,m\} \\ i\in\{1,2\}}} K_N((z_k,i),\cdot)
\end{equation}
Notice that $h$ is in the Cameron--Martin space $H$ associated to $g_{\sN}$ and 
\begin{equation}\label{eq:P^-1(h)}
	P^{-1}(h)= \sum_{k=1}^{m} f_N^{\varepsilon_N}(z_k)+f_{2N}^{\varepsilon_{2N}}(z_k).
\end{equation}
It follows from \eqref{eq:K_N_positive}, \eqref{eq:decay_K_N} and $K_N\geq0$ that there exists $C_0,C_1\in(0,\infty)$ such that
\begin{enumerate}[(a)]
	\item $0\leq h(z,i)\leq C_0\, 1_{\{|z-x_{j'}|\leq 6N\}}~~ \forall\, z\in\R^d, ~\forall\, i\in\{1,2\}$,
	\item $h(z,i)\geq c_2 ~~ \forall\, z\in \bar{\Gamma}, ~\forall\, i\in\{1,2\}$, 
	\item $\E[P^{-1}(h)^2]=\sum_{\substack{k, k'\in\{1,2,...,m\} \\ i,i'\in\{1,2\}}} K_N((z_k,i),(z_{k'},i')) \leq C_1N$,
\end{enumerate}
where $\bar{\Gamma}:=\bigcup_{k=1}^{m} B_{r_1}(z_i)$ is the ``$r_1$-thickened path" determined by $\Gamma$.
Let 
$$\tilde{s}=\tilde{s}(N,n,x_{j'}):= C_0^{-1} \, \min_{y\in B_{6N}(x_{j'})} \tau\big(\tfrac{y-x_n}{2N}\big)s_{\sN},$$ 
which is defined in such a way that, in view of property (a) above, we have
\begin{equation}\label{eq:sh<sprinkling} 
\tilde{s}h\leq \tau(\tfrac{\cdot-x_n}{2N})s_{\sN}.
\end{equation}
Set $L:=(N^{\frac{\tilde{\gamma}-1}{2}}+\ell_0+s_{\sN})/c_2$. It follows from property (b) and \eqref{eq:G_def} that on the event $G$ we have $\bar{\Gamma}\subset \pazocal{I}_{n+\frac12}\big(g_{\sN} + L h\big)$. Therefore, on $\text{CPiv}^n_{x_{j'}}(2N)\cap G$, the event $A$ happens for $\pazocal{I}_{n+\frac12}\big(g_{\sN} + L h\big)$ but it does not happen for $\pazocal{I}_{n+\frac12}=\pazocal{I}_{n+\frac12}(g_{\sN})$. We then deduce by monotonicity that on $\text{CPiv}^n_{x_{j'}}(2N)\cap G$, there exist sites $y_1$ and $y_2$ (which determine $h$) as above and an integer $t\in[0 \,,\, L/\tilde{s})$ such that the event
\begin{equation}\label{eq:E_def}
	E(y_1,y_2,t):=\big\{\pazocal{I}_{n+\frac12}\big(g_N + t\tilde{s} h\big)\notin A\big\} \cap \big\{\pazocal{I}_{n+\frac12}\big(g_N + (t+1)\tilde{s} h\big)\in A\big\}
\end{equation}
happens. Overall, we have proved the following inclusion
\begin{equation}
	\text{CPiv}^n_{x_{j'}}(2N)\cap G \, \subset \, \bigcup_{y_1,y_2,t} E(y_1,y_2,t)\cap G.
\end{equation}
By a union bound, we conclude that there exist $y_1$ and $y_2$ and  $t$ as above such that
\begin{align}\label{eq:CPiv_to_Et}
	\begin{split}
	\P[E(y_1,y_2,t)\cap G]&\geq \big|B_{2N}(x_{j'})\cap r_1\Z^d\big|^{-2} \,\, \Big\lceil\frac{\tilde{s}}{L}\Big\rceil \,\, \P[\text{CPiv}^n_{x_{j'}}(2N)\cap G] \\
	& \geq c N^{-2d}\, \tau(\tfrac{x_{j'}-x_n}{2N})\, s_{\sN}  \, \P[\text{CPiv}^n_{x_{j'}}(2N)\cap G].
	\end{split}
\end{align}

We will now apply the Cameron--Martin theorem to the event $E(y_1,y_2,t)\cap G$. 
First notice that by \eqref{eq:P^-1(h)} and the property (c) above, the following bound holds almost surely on the event $G$
\begin{equation}\label{eq:R-N_bound}
	\left|P^{-1}(-t\tilde{s} h)-\frac12 \E[P^{-1}(-t\tilde{s} h)^2]\right| = \left|t\tilde{s}P^{-1}(h)+\frac12 (t\tilde{s})^2\E[P^{-1}(h)^2]\right|\leq C_3 N^{\tilde{\gamma}}.
\end{equation}
Since $g_{\sN}-t\tilde{s} h\in E(y_1,y_2,t) \iff g_N\in E(y_1,y_2,0)$, one can apply the Theorem~\ref{thm:Cameron-Martin} for the field $g_{\sN}$ (conditionally on $T$, which is independent from $g_{\sN}$) and the shift function $-t\tilde{s} h\in H$, which combined with \eqref{eq:R-N_bound} gives
\begin{equation}\label{eq:Et_to_E0}
	\P[E(y_1,y_2,0)]\geq e^{-C_3 N^{\tilde{\gamma}}} \P[E(y_1,y_2,t)\cap G].
\end{equation}
Due to \eqref{eq:n+1/2_to_n+1}, \eqref{eq:sh<sprinkling} and \eqref{eq:E_def}, we have $E(y_1,y_2,0)\subset \big\{\pazocal{I}_{n+\frac12}\notin A\} \cap \big\{\pazocal{I}_{n+1}\in A\big\}$, hence 
\begin{equation}\label{eq:E0_to_b}
	\P[E(y_1,y_2,0)]\leq q_n.
\end{equation}
Combining \eqref{eq:CPiv_to_Et}, \eqref{eq:Et_to_E0} and \eqref{eq:E0_to_b} gives the desired inequality \eqref{eq:goodclosepiv_to_b}, thus concluding the proof.
\end{proof}	

\begin{remark}\label{rmk:B-F_adapt}
	In order to obtain the quantitative relation $N\asymp\sqrt{\log s^{-1}}$ claimed in Remark~\ref{rmk:quantitative} for the case of the Bargmann--Fock field, it suffices to make the following simple adaptations in the choice of parameters. Fix any $\gamma>d$ and let $s_{\si{N}}=N^{\frac{\gamma}{2}}e^{-\frac{c}{2} N^2}$, $\varepsilon_{\si{N}}=e^{-\frac{c}{2} N^2}$ and $\delta_{\si{N}}=e^{-N^{\gamma}}$, where $c$ is given by \eqref{eq:local_comp_r2}. With these changes, the rest of the proof works as before.
\end{remark}

\begin{remark}\label{rmk:beta>d-1/2}
	If one wanted to prove Theorem~\ref{thm:comparison} under the (conjecturally optimal) assumption $\beta>d-\frac12$, one could try to make the following changes to the proof presented above. First, take $\eta\in(0,\beta-d+\frac12)$ instead, which implies that $\gamma:=2\beta-d-2\eta>d-1$. Now, for $\tilde{\gamma}\in (d-1,\gamma)$, redefine $G$ as (in addition to $T=0$, as before) the existence of a ``linear path'' of sites satisfying $|f^{\varepsilon_N}_N|,\, |f^{\varepsilon_{2N}}_{2N}| \leq N^{\frac{\tilde{\gamma}-1}{2}}$ between \emph{every} pair of macroscopic connected sets in $B_{4N}(x_{j'})$, instead of asking for a bound on the whole box. By using renormalization techniques, it should be possible to prove the improved bound $\P[G]\geq 1-e^{-cN^{\tilde{\gamma}}}$ instead of \eqref{eq:G_bound}. The rest of the argument then works similarly as before with the additional observation that, for typical $N$ and $x_{j'}$, both $\mathcal{C}_{S_1}^D$ and $\mathcal{C}_{S_2^{\mathsf{c}}}^D$ are macroscopic in $B_{4N}(x_{j'})$ on the event $\text{CPiv}^n_{x_{j'}}(2N)$. Since this adaptation is non-trivial and would probably result in a substantially more technical proof, we chose not to follow this strategy and we only claim the result under the stronger assumption $\beta>d$.
\end{remark}

\begin{remark}\label{rmk:beta>d}
	We now discuss the definition of admissible events $A$ for which the comparison \eqref{eq:comparison} holds. First notice that Steps 1, 2 and 4 relied on the fact that $A$ is a connection event, but the precise geometry of $S_1$, $S_2$ and $D$ has only been used to obtain the inequality \eqref{eq:bound_F_2} in Step 1 -- we simply used the fact that $\partial S_1$ can be localy covered at any scale $N$ by $O(N^{d-1})$ unit balls, which can be seen as a sort of ``uniformly flat curvature'' property. If we assume the stronger inequality $\beta>d+\tfrac12$, then the above proof of \eqref{eq:comparison} works for arbitrary connection events $A=\{\lr{D}{}{S_1}{S_2^{\mathsf{c}}}\}$, regardless of the geometry of $S_1$, $S_2$ and $D$. Indeed, it suffices to choose $\gamma>\tilde{\gamma}>d+1$ and redefine the event $F$ in Step 1 to be $F=\{\pazocal{I}_{n+\frac12}\cap B_{4N}(x_j)=\emptyset\}$, in which case we have $\P[F]\geq e^{-cN^d}$ instead of \eqref{eq:bound_F}, and the proof of Lemma~\ref{lem:conv_bound} still works as $\tilde{\gamma}-1>d$. Actually, if one combines this with the modified definition of good event proposed in Remark~\ref{rmk:beta>d-1/2} above, one might even be able to prove \eqref{eq:comparison} for arbitrary connection events $A$ under our original assumption $\beta>d$. Since in this case the geometry of the connection event $A$ does not matter, it seems natural to expect that \eqref{eq:comparison} holds for every increasing event $A$, which would imply the stochastic domination $\pazocal{E}^{\varepsilon,\delta}_{N}(\ell-s)\prec\pazocal{E}(\ell)\prec\pazocal{E}^{\varepsilon,\delta}_{N}(\ell+s)$. However, it is not clear how one could adapt the above proof for increasing events. In fact, if $A$ was an arbitrary increasing event,
	Step 4 could be adapted -- possibly under a stronger assumption on $\beta$ -- by simply constructing $h$ to be uniformly positive on the whole ball $B_{2N}(x_{j'})$ instead of a linear path. In Step 2 though, we strongly used the fact that $A$ is a connection event in order to reduce the scale of pivotality by conditioning on one cluster and making surgeries in the complement of its $2N$-neighborhood.
\end{remark}

\paragraph{Acknowledgements.} We would like to thank Hugo Duminil-Copin and Alejandro Rivera for inspiring discussions in the course of this project. We are also grateful to Stephen Muirhead and Pierre-Fran\c{c}ois Rodriguez for their valuable comments on an earlier draft of this paper. This research was supported by the Swiss National Science Foundation and the NCCR SwissMAP.


\end{document}